\def\maxexp{10}
\def\suppress#1{}
\def\nl{\smallskip\noindent}
\newcommand{\ad}{\operatorname{ad}}
\newcommand{\Aut}{\mathrm{Aut}}
\newcommand{\BB}{{\mathrm B}_2\SC}
\newcommand{\ch}{^\vee}
\newcommand{\chr}{\operatorname{char}}
\newcommand{\Cent}{\mathrm{C}}
\newcommand{\Der}{\operatorname{Der}}
\newcommand{\End}{\operatorname{End}}
\newcommand{\F}{{\mathbb F}}
\newcommand{\GAP}{{\sc GAP}}
\newcommand{\GF}{\operatorname{GF}}
\newcommand{\Lie}{\operatorname{Lie}}
\newcommand{\LZ}{L_{\mathbb Z}}
\newcommand{\LF}{L_{\mathbb F}}
\newcommand{\Magma}{{\sc Magma}}
\newcommand{\N}{{\mathbb N}}
\newcommand{\Ad}{\mathrm{^{ad}}}
\newcommand{\SC}{\mathrm{^{sc}}}
\newcommand{\rint}{\mathrm{^{(a)}}}
\newcommand{\rnk}{\operatorname{rk}}
\newcommand{\tO}{O^{\sim}}
\newcommand{\Z}{{\mathbb Z}}
\newcommand{\An}{\mathrm{A}_n}
\newcommand{\Bn}{\mathrm{B}_n}
\newcommand{\Cn}{\mathrm{C}_n}
\newcommand{\Dn}{\mathrm{D}_n}
\newcommand{\En}{\mathrm{E}_{6,7,8}}
\newcommand{\Fn}{\mathrm{F}_4}
\newcommand{\Gn}{\mathrm{G}_2}
\newcommand{\tmpalgocap}{}
\newcommand{\tmpalgolbl}{}
\newenvironment{algorithm}[3]{%
  \renewcommand{\tmpalgocap}{#2}%
  \renewcommand{\tmpalgolbl}{#3}%
  \begin{figure}%
  \begin{tabbing}%
  \quad\quad\=\quad\=\quad\=\quad\=\kill%
  {\sc #1} \\%
}{%
  \end{tabbing}%
  \caption{\tmpalgocap}\label{alg_\tmpalgolbl}%
  \end{figure} %
}
\newcounter{algln}
\newcommand{\lnreset}{\setcounter{algln}{0}}
\newcommand{\lnp}{\addtocounter{algln}{1} {\footnotesize \arabic{algln}}}
\author[Arjeh~M.~Cohen]{Arjeh~M.~Cohen}
\address[Arjeh~M.~Cohen]{
Department of Mathematics and Computer Science\\
Technische Universiteit Eindhoven\\
P.O. Box 513, 5600 MB Eindhoven, Netherlands}
\email{a.m.cohen@tue.nl}
\author[Dan~Roozemond]{Dan~Roozemond}
\address[Dan~Roozemond]{
Department of Mathematics and Computer Science\\
Technische Universiteit Eindhoven\\
P.O. Box 513, 5600 MB Eindhoven, Netherlands}
\email{d.a.roozemond@tue.nl}
\newtheorem{theorem}{Theorem}
\newtheorem{lemma}[theorem]{Lemma}
\newtheorem{proposition}[theorem]{Proposition}
\title{Computing Chevalley bases in small characteristics}
\begin{document}

\begin{abstract}
Let $L$ be the Lie algebra of a simple algebraic group defined over a field
$\F$ and let $H$ be a split maximal toral subalgebra of $L$.  Then $L$ has a
Chevalley basis with respect to $H$.  If $\chr(\F) \neq 2,3$, it is known how
to find it.  In this paper, we treat the remaining two characteristics.  To
this end, we present a few new methods, implemented in {\Magma}, which vary
from the computation of centralisers of one root space in another to the
computation of a specific part of the Lie algebra of derivations of $L$.
\end{abstract}

\maketitle

\begin{center}
\emph{On the occasion of the distinguished birthdays \\
of our distinguished colleagues John Cannon and Derek Holt}
\end{center}

\bigskip

\section{Introduction}

\subsection{The main result}\label{sec_intro_mainresult}
For computational problems regarding a split reductive algebraic group $G$
defined over a field $\F$, it is often useful to calculate within its Lie
algebra $L$ over $\F$. For instance, the conjugacy question for two split
maximal tori in $G$ can often be translated to a conjugacy question for two
split Cartan subalgebras of $L$.  Here, a \emph{Cartan subalgebra} $H$ of $L$
is understood to be a maximal toral subalgebra, that is, it is commutative,
left multiplication by each of its elements is semisimple (i.e., has a
diagonal form with respect to a suitable basis over a large enough extension
field of $\F$), and it is maximal (with respect to inclusion) among
subalgebras of $L$ with these properties; it is called \emph{split} (or
$\F$-\emph{split}) if left multiplication by $h$, denoted $\ad_h$, has a
diagonal form with respect to a suitable basis over $\F$ for every $h\in H$.
Such a Cartan subalgebra is the Lie algebra of a split maximal torus in $G$.

The conjugacy question mentioned above can be answered by finding Chevalley
bases with respect to each split Cartan subalgebra, so the transformation from
one basis to the other is an automorphism of $L$, and subsequently adjusting
the automorphism with the normalizer of one Cartan subalgebra so as to obtain
an element of $G$.  In this light, it is of importance to have an algorithm
finding a Chevalley basis (see Section \ref{sec:CLA} for a precise
definition). Such algorithms have been discussed for the case where the
characteristic of the underlying field is distinct from $2$ and from
$3$. However, the latter two characteristics are the most important ones for
finite simple groups arising from algebraic groups, so there is a need for
dealing with these special cases as well. This is taken care of by the
following theorem, which is the main result of this paper.  

\begin{theorem}\label{th:main}
Let $L$ be the Lie algebra of a split simple algebraic group of rank $n$
defined over an effective field $\F$.  Suppose that $H$ is an $\F$-split
Cartan subalgebra of $L$.  If $L$ is given by means of a multiplication table
with respect to an $\F$-basis of $L$ and $H$ is given by means of a spanning
set, then there is a Las Vegas algorithm that finds a Chevalley basis of $L$
with respect to $H$. If $\F=\F_q$, this algorithm needs $\tO(n^{\maxexp}\log(q)^4)$
elementary operations.
\end{theorem}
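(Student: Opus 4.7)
The plan is to reduce the construction of a Chevalley basis to a sequence of linear-algebra problems over $\F$, following the strategy that is known in characteristic coprime to $6$ and extending it by new subroutines that handle the pathologies of characteristics $2$ and $3$. I would proceed in three phases.

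First, I would produce the root space decomposition $L = H \oplus \bigoplus_\alpha L_\alpha$ by simultaneously diagonalizing $\ad_h$ for random $h \in H$. Since $H$ is $\F$-split, with high probability one or two such $h$ yield distinct eigenvalues on distinct roots, and this is where the Las Vegas aspect enters; solving the associated eigenvalue problems costs $\tO(n^{2\omega})$ field operations on matrices of size $O(\dim L) = O(n^2)$. Together with an identification of the abstract root system from the isogeny type that is already encoded in the input, this yields the unordered collection of candidate root spaces.

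Second, I would assign Chevalley generators $e_\alpha \in L_\alpha$ normalized so that $[e_\alpha,e_{-\alpha}] = h_\alpha$ and $[e_\alpha,e_\beta] = N_{\alpha,\beta}e_{\alpha+\beta}$ with the canonical signs, iterating on the height of positive roots via the Jacobi identity. When $\chr(\F) \not\in \{2,3\}$ this is routine; the heart of the matter, and the main obstacle, is that in characteristics $2$ and $3$ the scalars $N_{\alpha,\beta}$ that one would like to use for fixing a normalization can vanish modulo $p$, and proportional short/long root pairs (for instance in type $\Bn$, $\Cn$, $\Fn$ for $p=2$ and $\Gn$ for $p=3$) may fail to be separated by the eigenvalue data of $\ad_H$ alone, so that the generalized root space for an eigenvalue of $\ad_H$ must still be broken apart by non-linear information.

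Third, to resolve those degeneracies I would deploy the two ingredients announced in the abstract. Centralizers $C_{L_\beta}(L_\alpha) = \{x \in L_\beta : [x,L_\alpha]=0\}$ are computed as kernels of an explicit linear map and are used to single out the canonical subspace within a generalized root space that plays the role of the missing short-root summand; a controlled portion of $\Der(L)$ is then computed to recover information lost whenever $L$ has non-trivial centre or fails to be perfect, which in small characteristic occurs in several isogeny types. Both are kernels of linear maps on spaces of dimension $O(n^2)$, hence cost $\tO(n^{2\omega}) = \tO(n^6)$ field operations each; performing one per root pair gives the bound $\tO(n^{10})$ field operations, and since each operation in $\F_q$ costs $\tO(\log q)$ while matrix inversion and the Las Vegas retries contribute further $\log q$ factors, the total bound is $\tO(n^{\maxexp}\log(q)^4)$. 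Correctness can be certified at the end by verifying a polynomial number of bracket identities in the same budget, so the algorithm is genuinely Las Vegas. The routine linear-algebra and root-combinatorial bookkeeping of the first two phases are standard; what truly needs work, and where the case analysis will be heaviest, is phase three, because the small-characteristic degeneracies are type- and isogeny-dependent and each must be verified to be resolved by the centralizer or derivation computation assigned to it.
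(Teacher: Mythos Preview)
Your outline matches the paper's high-level structure, but there is a genuine gap in phase three and a misattribution of the Las Vegas component.

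The central difficulty in characteristic $2$ is that for \emph{every} root $\alpha$ one has $\overline\alpha=\overline{-\alpha}$, so the generic root space is the two-dimensional $\F X_\alpha\oplus\F X_{-\alpha}$. Your proposed tool for splitting a generalized root space is the linear centralizer $C_{L_\beta}(L_\alpha)$, but this cannot separate $X_\alpha$ from $X_{-\alpha}$: the map $x\mapsto[x,L_\alpha]$ is linear in $x$, and no linear condition distinguishes the two frame lines inside $L_{\overline\alpha}$. The paper's decisive idea here is nonlinear: for a neighbouring root $\beta$ one has $\ad_x^2(L_{\overline\beta})=0$ precisely when $x\in\F X_\alpha\cup\F X_{-\alpha}$, so writing $x=r_1+tr_2$ and expanding $[x,[x,y]]$ yields a genuine quadratic equation in $t$ over $\F$. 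Solving it over $\F_q$ costs $\tO(\log(q)^4)$ elementary operations, and this is the actual source of the fourth power of $\log q$ in the bound, not matrix inversion or retries. Without this quadratic step (the paper's method $[\mathrm{A}_2]$), the frame cannot be recovered in the simply-laced characteristic-$2$ cases at all.

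Second, the Las Vegas character does not arise from picking random $h\in H$ to separate eigenvalues. In the problematic cases no $h$ separates $\overline\alpha$ from $\overline{-\alpha}$ (or, in $\mathrm C_n^{\mathrm{sc}}(2)$, the long roots from $0$), so that strategy fails deterministically; the paper diagonalizes with respect to a full basis of $H$ and accepts the resulting higher-dimensional eigenspaces. The randomness enters only through the Meat-axe, which is invoked in a few specific cases (notably $\mathrm B_2^{\mathrm{sc}}(2)$, $\mathrm C_n^{\mathrm{sc}}(2)$, and the $\mathrm D_4$ ideals) to locate certain submodules.
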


Here $\tO(N)$ means $O(N(\log(N))^c)$ for some constant $c$.  Recall (e.g.,
from \cite[Introduction]{Shparlinski99Book}) that arithmetic operations in
$\F$ are understood to be addition, subtraction, multiplication, division, and
equality testing.  If $\F$ is the field $\F_q$ of size $q$, these all take
$\tO(\log(q))$ elementary operations.  Performing standard linear algebra
arithmetic, that is, operations on matrices of size $m$, like multiplication,
determinant, and kernel (solving linear equations), takes $O(m^3)$ arithmetic
operations.

Better estimates than those of the theorem are conceivable, for instance
because better bounds on matrix multiplication exist. However, our primary
goal was to establish that the algorithm is polynomial in
$n\log(q)$. Moreover, in comparison to the dimension $O(n^2)$ of $L$ or the
estimate $O(n^6)$ for arithmetic operations needed for multiplying two
elements of $L$, the high exponent of $n$ in the timing looks more reasonable
than may seem at first sight.


The proof of Theorem \ref{th:main} rests on Algorithm \ref{alg_chevbas}, which
is really an outline of an algorithm further specified in the course of the
paper. The algorithm is implemented in {\Magma} \cite{Magma}.
We intend to make the implementation public as a {\Magma} package once the
code has been cleaned up.

The algorithm is mostly deterministic.  However, in some instances where $\F$
is of characteristic $2$ (such as Method [$\BB$] and the case where $L$ is of
type $\mathrm D_4$; see Sections \ref{sec_multdimsol_D4},
\ref{sec_multdimsol_B2SC}, and \ref{sec_multdimsol_CnSC}) we use the Meat-axe
(cf.~\cite{Holt,HoltEickOBrien}) for finding a particular submodule of a given
module.  For finite fields, the Meat-axe algorithm is analysed \ in
\cite{Ronyai} and \cite[Section 2]{Holt}: irreducible submodules of a finite
$L$-module of dimension $m$ over $\F_q$ can be found in Las Vegas time
$\tO(m^3 \log(q))$ (in Section \ref{sec_intro_computing_ideals} below it is explained why
this result can be applied to Lie algebras).  For infinite fields, Meat-axe
procedures are known; however, we know of no proof
of polynomiality in the literature.

Algorithm \ref{alg_chevbas} assumes that besides $L$ and
$H$ the root datum $R$ of the underlying group is known (see Section \ref{sec_intr_rootdata}).
However, in
Section \ref{sec_conclusion} we show that this root datum can be
determined by running the algorithm a small number of times.

Thanks to the characterization of Lie algebras of split
reductive algebraic groups described in Theorem \ref{thm_liealg_has_chevbas}
below, we can view the Lie algebras in Theorem \ref{th:main} as Chevalley Lie algebras,
which are defined below.

Our treatment of Lie algebras and the corresponding algebraic groups rests on
the theory developed mainly by Chevalley and available in the excellent books
by Borel \cite{Bor69}, Humphreys \cite{Hum75}, and Springer \cite{Springer98}.
Our computational set-up is as in \cite{CMT04}.

\subsection{Root Data}\label{sec_intr_rootdata}
Split reductive algebraic groups are determined by their fields of definition
and their root data. The latter is of importance to the corresponding Lie
algebra and will therefore be discussed first. Throughout this paper we let $R
= (X, \Phi, Y, \Phi\ch)$ be a \emph{root datum of rank} $n$ as defined in
\cite{CMT04}.  This means $X$ and $Y$ are dual free $\Z$-modules of dimension
$n$ with a bilinear pairing $\langle \cdot, \cdot \rangle : X \times Y
\rightarrow \Z$; furthermore, $\Phi$ is a finite subset of $X$ and $\Phi\ch$ a
finite subset of $Y$, called the \emph{roots} and \emph{coroots},
respectively, and there is a one-to-one correspondence $\ch : \Phi \rightarrow
\Phi\ch$ such that $\langle \alpha, \alpha\ch \rangle = 2$ for all $\alpha \in
\Phi$.

If $\alpha \in \Phi$ then $s_\alpha : x \mapsto x - \langle x, \alpha\ch \rangle \alpha$ is a reflection on $X$ leaving $\Phi$ invariant and $W = \langle s_\alpha \mid \alpha \in \Phi\rangle$ is a Coxeter group.
Similarly, $s_\alpha\ch: y \mapsto y -\langle \alpha, y \rangle \alpha\ch$ is a reflection on $Y$ leaving $\Phi\ch$ invariant and the group generated by all these is isomorphic to $W$.
In particular there are $\alpha_1, \ldots, \alpha_l \in \Phi$, linearly independent in $X \otimes \mathbb Q$, such that $\Phi = \Phi^+ \;\dot\cup\; \Phi^-$, where $\Phi^+ = \Phi \cap (\mathbb{N}\alpha_1 + \cdots + \mathbb{N}\alpha_l)$ and $\Phi^- = -\Phi^+$. 
The roots $\alpha_1, \ldots, \alpha_l$ and the coroots $\alpha_1\ch, \ldots,
\alpha_l\ch$ are called \emph{simple}. The number $l$ is called the
\emph{semisimple rank} of $L$ (and of $G$).

The pair $(W,S)$, where $S = \{ s_{\alpha_1}, \ldots, s_{\alpha_l} \}$, is a
Coxeter system.  The \emph{Cartan matrix} $C$ of $R$ is the $l \times l$
matrix whose $(i,j)$ entry is $\langle \alpha_i, \alpha_j\ch\rangle$.  The
matrix $C$ is related to the Coxeter type of $(W,S)$ as follows: $s_{\alpha_i}
s_{\alpha_j}$ has order $m_{ij}$ where
\[
\cos\left(\frac{\pi}{m_{ij}}\right)^2=\frac{\langle \alpha_i, \alpha_j\ch\rangle \langle \alpha_j, \alpha_i\ch\rangle}{4}.
\]
The \emph{Coxeter matrix} is $(m_{ij})_{1\le i,j\le l}$ and the \emph{Coxeter
diagram} is a graph-theoretic representation thereof: it is a graph with
vertex set $\{1,\ldots,l\}$ whose edges are the pairs $\{i,j\}$ with
$m_{ij}>2$; such an edge is labeled $m_{ij}$. The Cartan matrix $C$ 
determines the Dynkin diagram (and vice versa). For, the
\emph{Dynkin diagram} is the Coxeter diagram with the following extra
information about root lengths: $\langle \alpha_i, \alpha_j\ch \rangle <
\langle \alpha_j, \alpha_i\ch \rangle$ if and only if the Coxeter diagram edge
$\{i,j\}$ (labelled $m_{ij}$) is replaced by the directed edge $(i,j)$ in the
Dynkin diagram (so that the arrow head serves as a mnemonic for the inequality
sign indicating that the root length of $\alpha_i$ is larger than the root
length of $\alpha_j$).

A root datum is called \emph{irreducible} if its Coxeter diagram is connected.
A root datum is called \emph{semisimple} if its rank is equal to its
semisimple rank.  Each semisimple root datum can be decomposed uniquely into
irreducible root data.  The Dynkin diagrams of irreducible root systems are
well known, and described in Cartan's notation ${\rm A}_n$ $(n\ge1)$, ${\rm
B}_n$ $(n\ge2)$, ${\rm C}_n$ $(n\ge3)$, ${\rm D}_n$ $(n\ge4)$, ${\rm E}_n$
$(n\in\{6,7,8\})$, ${\rm F}_4$, ${\rm G}_2$.  The nodes are usually labeled as
in \cite{Bou81}.

For computations, we fix $X = Y = \mathbb Z^n$ and set $\langle x,y \rangle =
xy^\top$, which is an element of $\mathbb Z$ since $x$ and $y$ are row
vectors.  Now take $A$ to be the integral $l \times n$ matrix containing the
simple roots as row vectors; this matrix is called the \emph{root matrix} of
$R$. Similarly, let $B$ be the $l \times n$ matrix containing the simple
coroots; this matrix is called the \emph{coroot matrix} of $R$.  Then $C =
AB^\top$ and $\Z \Phi = \Z A$ and $\Z \Phi\ch = \Z B$.  For $\alpha \in \Phi$
we define $c^\alpha$ to be the $\Z$-valued size $l$ row vector satisfying
$\alpha = c^\alpha A$.  In the greater part of this paper, including Theorems
\ref{th:main} and \ref{thm_liealg_has_chevbas}, we will let $G$ be a simple
group, so $l=n$.

\subsection{Chevalley Lie Algebras}\label{sec:CLA}
Given a root datum $R$ we consider the free $\Z$-module 
\[
\LZ(R) = Y \oplus \bigoplus_{\alpha \in \Phi} \Z X_\alpha,
\]
where the $X_\alpha$ are formal basis elements. The rank of $\LZ(R)$ is $n +
|\Phi|$.  We denote by $[\cdot,\cdot]$ the alternating bilinear map $\LZ(R)
\times \LZ(R) \rightarrow \LZ(R)$ determined by the following rules:
\[
\begin{array}{lrclr}
\mbox{For } y, z \in Y : & [y, z] & = & 0, & \mathrm{(CB}\mathbb Z\mathrm{1)}\\
\mbox{For } y \in Y, \alpha \in \Phi : & [ X_\alpha, y ] & = & \langle \alpha, y \rangle X_\alpha , & \mathrm{(CB}\mathbb Z\mathrm{2)}\\
\mbox{For } \alpha \in \Phi : & [ X_{-\alpha}, X_\alpha ] & = & \alpha\ch , & \mathrm{(CB}\mathbb Z\mathrm{3)}\\
\mbox{For } \alpha,\beta \in \Phi, \alpha \neq \pm \beta: & [X_\alpha, X_\beta] & = & 
  \left\{ \begin{array}{ll}
  N_{\alpha,\beta} X_{\alpha+\beta} & \mbox{if } \alpha + \beta \in \Phi, \cr
  0 & \mbox{otherwise.}
  \end{array} \right. & \mathrm{(CB}\mathbb Z\mathrm{4)}
\end{array}
\]
The $N_{\alpha,\beta}$ are integral structure constants chosen to be $\pm(p_{\alpha,\beta}+1)$, 
where $p_{\alpha,\beta}$ is the biggest number such that $\alpha
-p_{\alpha,\beta}\beta$ is a root and the signs are chosen (once and for all)
so as to satisfy the
Jacobi identity.
It is easily verified that $N_{\alpha,\beta} = -N_{-\alpha,-\beta}$ and it is a well-known result 
(see for example \cite{Car72}) that  such a product exists.
$\LZ(R)$ is called a \emph{Chevalley Lie algebra}.

A basis of $\LZ(R)$ that consists of a basis of $Y$ and the formal elements
$X_\alpha$ and satisfies $\mathrm{(CB}\mathbb
Z\mathrm{1)}$--$\mathrm{(CB}\mathbb Z\mathrm{4)}$ is called a \emph{Chevalley
basis} of the Lie algebra $\LZ(R)$ with respect to the split Cartan subalgebra
$Y$ and the root datum $R$.  If no confusion is imminent we just call this a
\emph{Chevalley basis} of $\LZ(R)$.

\bigskip

For the remainder of this section, we let $\LZ(R)$ be a Chevalley Lie algebra
with root datum $R$, we fix $X = Y = \mathbb Z^n$, a basis of row vectors
$e_1, \ldots, e_n$ of $X$, and a basis of row vectors $f_1, \ldots, f_n$ of
$Y$ dual to $e_1, \ldots, e_n$ with respect to the pairing
$\langle\cdot,\cdot\rangle$.  Moreover, we let $\F$ be a field, we set $h_i =
y_i \otimes 1$, $i = 1,\ldots,n$, and $H = Y \otimes \F$.  Now tensoring
$\LZ(R)$ with $\F$ yields a Lie algebra denoted $\LF(R)$ over $\F$, and the
integral Chevalley basis relations $\mathrm{(CB}\mathbb
Z\mathrm{1)}$--$\mathrm{(CB}\mathbb Z\mathrm{4)}$ can be rephrased as:

\[
\begin{array}{lrclr}
\mbox{For } i,j \in \{1, \ldots, n \} : & [h_i, h_j] & = & 0, & \mathrm{(CB1)}\\
\mbox{For } i \in \{1, \ldots, n \}, \alpha \in \Phi : & [ X_\alpha, h_i ] & = & \langle \alpha, f_i \rangle X_\alpha , & \mathrm{(CB2)}\\
\mbox{For } \alpha \in \Phi : & [ X_{-\alpha}, X_\alpha ] & = & \sum_{i=1}^n \langle e_i, \alpha\ch \rangle h_i, & \mathrm{(CB3)} \\
\mbox{For } \alpha,\beta \in \Phi, \alpha \neq \pm \beta: & [X_\alpha, X_\beta] & = & 
  \left\{ \begin{array}{ll}
  N_{\alpha,\beta} X_{\alpha+\beta} & \mbox{if } \alpha + \beta \in \Phi, \cr
  0 & \mbox{otherwise.}
  \end{array} \right. & \mathrm{(CB4)}
\end{array}
\]

A Lie algebra is called
\emph{split} if it has a split Cartan subalgebra. The Cartan subalgebra $H$ of
each Chevalley Lie algebra $L_\F(R)$ is split.  The image of a Chevalley basis
with respect to $Y$ and $R$ in $L_{\F}(R)$ is called a
\emph{Chevalley basis of $L$ with respect to $H$ and $R$}.
The interest in Chevalley Lie algebras comes from the following result.

\begin{theorem}[Chevalley \cite{Chev58}]\label{thm_liealg_has_chevbas}
Suppose that $L$ is the Lie algebra of a split simple algebraic group $G$
over $\F$ with root datum $R = (X,\Phi,Y,\Phi\ch)$, and that $H$ is a split
Cartan subalgebra of $L$. Then $L \cong \LF(R)$ and so it has a Chevalley
basis with respect to $H$ and $R$.  Furthermore, any two split Cartan
subalgebras of $L$ are conjugate under $G$.
Finally, if $G$ is simple then $R$ is irreducible.
\end{theorem}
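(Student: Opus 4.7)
The plan is to decompose $L$ as an $H$-module under the adjoint action, identify the resulting weights with the roots in $\Phi$, and then construct the basis by a careful choice of generators of each root space. Since $H$ is $\F$-split and abelian, the operators $\ad_h$ with $h\in H$ are simultaneously diagonalisable over $\F$, so
\[
L = L_0 \oplus \bigoplus_{\lambda \in \Lambda} L_\lambda
\]
for a finite $\Lambda \subset H^*\setminus\{0\}$; the maximality of $H$ among toral subalgebras forces $L_0 = H$. Since $H = \Lie(T)$ for a split maximal torus $T$ of $G$, differentiation identifies the characters of $T$ on $L$ with the root set $\Phi$ of the datum $R$, so $\Lambda = \Phi$. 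Each root subgroup $U_\alpha$ of $G$ is one-dimensional, hence $\dim L_\alpha = 1$ for every $\alpha \in \Phi$.

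Next, for each $\alpha \in \Phi^+$ I would pick any nonzero $X_\alpha \in L_\alpha$ and, using the three-dimensional simple subalgebra spanned by $L_\alpha$, $L_{-\alpha}$ and $[L_{-\alpha}, L_\alpha]$, rescale a nonzero $X_{-\alpha} \in L_{-\alpha}$ so that $[X_{-\alpha}, X_\alpha] = \alpha\ch$ in $H \cong Y\otimes\F$. Relation (CB1) is then immediate from $H$ being abelian, (CB2) is the definition of $L_\alpha$, the vanishing branch of (CB4) is forced by the weight inclusion $[L_\alpha,L_\beta]\subseteq L_{\alpha+\beta}$, and the rescaling gives (CB3). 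The hard part, and the real content of the theorem, is to show that after a single global adjustment of signs on the $X_\alpha$ the structure constants $N_{\alpha,\beta}$ can be taken equal to $\pm(p_{\alpha,\beta}+1)$ while still satisfying the Jacobi identity. The integrality factor $p_{\alpha,\beta}+1$ comes out of analysing the $\beta$-string through $\alpha$ via $\mathrm{sl}_2$-representation theory, and the coherent global sign choice is Chevalley's sign lemma \cite{Chev58}, reproduced in \cite[Chapter 4]{Car72}. This simultaneously exhibits an integral form isomorphic to $\LZ(R)$ inside $L$ and yields the required isomorphism $L \cong \LF(R)$.

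For the conjugacy claim I would invoke the Borel--Tits theorem that all $\F$-split maximal tori of $G$ are $G(\F)$-conjugate and then transport this through the $\Lie$ functor, which is equivariant under conjugation, to split Cartan subalgebras of $L$. Finally, if $G$ is simple, the irreducibility of $R$ follows by contradiction: a disconnected Coxeter diagram would split $\Phi$ as $\Phi_1 \sqcup \Phi_2$ with $(\Phi_1+\Phi_2)\cap \Phi = \emptyset$, and (CB4) would then force a decomposition of $L$ into two proper nonzero ideals, contradicting the simplicity of $G$ via the bijection between Lie ideals of $L$ and connected closed normal subgroups of $G$ in this reductive setting.
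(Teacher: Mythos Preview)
The paper does not prove this theorem; it is quoted as a classical result of Chevalley with a reference to \cite{Chev58}, so there is no in-paper argument to compare against.

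Your sketch is the standard characteristic-zero argument, and in small characteristic it breaks at exactly the points the rest of the paper is devoted to. The claim that maximality of $H$ forces $L_0 = H$ is false: for $\mathrm{B}_2\SC$ or $\mathrm{C}_n\SC$ over a field of characteristic $2$ there are $\alpha\in\Phi$ with $\overline{\alpha}=0$, so $\Cent_L(H)$ properly contains $H$ (the extra elements are ad-nilpotent, so $H$ remains maximal toral). The identification $\Lambda = \Phi$ likewise fails, because differentiation of characters of $T$ is reduction modulo $p$ on $X$ and is not injective; distinct $\alpha,\beta\in\Phi$ can satisfy $\overline{\alpha}=\overline{\beta}$, and then the eigenspace $L_{\overline\alpha}$ has dimension greater than $1$---Proposition~\ref{prop_multdim} and Table~\ref{tab_multdimeigenspc} tabulate precisely when. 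Finally, the $\mathfrak{sl}_2$-representation argument for the coefficient $p_{\alpha,\beta}+1$ and the rescaling to $[X_{-\alpha},X_\alpha]=\alpha\ch$ both require division by integers that may vanish in $\F$.

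The proof in the literature runs in the opposite direction from yours: one constructs $\LZ(R)$ with its Chevalley basis over $\Z$ (the sign lemma is a characteristic-zero statement there), builds the split group $G$ from it, and then $\Lie(G_\F)\cong\LZ(R)\otimes\F=\LF(R)$ holds essentially by construction; the Chevalley basis is inherited rather than rediscovered from $H$. Conjugacy of split Cartan subalgebras then follows from conjugacy of split maximal tori in $G$, as you say. Your last paragraph also needs care: in small characteristic $L$ can have ideals (its centre, the short-root ideal, etc.) with no corresponding connected normal subgroup of $G$, so the Lie-ideal/normal-subgroup bijection you invoke is not available; irreducibility of $R$ is read off directly from the root datum of the simple group $G$.
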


In light of this theorem, for the proof of Theorem
\ref{th:main}, it suffices to deal with Chevalley Lie algebras corresponding
to an irreducible root datum.

\subsection{Some difficulties}\label{sec_chevbas}
So we will deal with the construction of a Chevalley basis for a Chevalley Lie
algebra $L$ over a field $\F$, given only a split Cartan subalgebra $H$.
Algorithms for finding such an $H$ have been constructed by the first author
and Murray \cite{CM06} and, independently, Ryba \cite{Ryba07}.  These
algorithms work for $\chr(\F)$ distinct from $2$ and $3$, and partly for
$\chr(\F) = 3$.  The first algorithm has been implemented in the \Magma\
computer algebra system \cite{Magma}.  For now, we assume that we are also
given the appropriate irreducible root datum $R$, but in Section \ref{sec_conclusion} we
argue that $R$ can be found from $L$ and $H$ without much effort.  The output
of our algorithm is an ordered basis $\{ X_\alpha, h_i \mid \alpha \in \Phi, i
\in \{1, \ldots, n\} \}$ of $\LF$ (based on some ordering of the elements of
$\Phi$) satisfying (CB1)--(CB4).

For fields of characteristic distinct from $2,3$, an algorithm for finding
Chevalley bases given split Cartan subalgebras has been implemented in several
computer algebra systems, for example \Magma\ \cite{Magma} and \GAP\
\cite{GAP}. For details, see for example \cite[Section 5.11]{deGraaf00}; the
algorithm {\sc CanonicalGenerators} described there produces a Chevalley basis
only up to scalars.  The scaling, however, can be accomplished by
straightforwardly solving linear equations.

If, however, we consider Lie algebras of simple algebraic groups over a field $\F$ of characteristic $2$ or $3$, the current algorithms break down in several places.
Firstly, the root spaces (joint eigenspaces) of the split Cartan subalgebra
$H$ acting on $L$ are no longer necessarily one-dimensional. This means that
we will have to take extra measures in order to identify which vectors in
these root spaces are root elements.  This problem will be dealt with in Section \ref{sec_finding_frames}.
Secondly, we can no longer always use
root chains to compute Cartan integers $\langle\alpha,\beta^\vee\rangle$,
which are the most important piece of information for the root identification
algorithm in the general case.  
We will deal with this problem in Section \ref{sec:RootIdent}.
Thirdly, when computing the Chevalley basis
elements for non-simple roots, we cannot always obtain $X_{\alpha+\beta}$ from
(CB4) by $X_{\alpha+\beta} = \frac{1}{N_{\alpha,\beta}} [ X_\alpha, X_\beta ]$
as $N_{\alpha,\beta}$ may be a multiple of $\chr(\F)$.
This problem, however, is easily dealt with by using a different order of the roots, so we will not discuss this any further.

\subsection{Roots}
Let $p$ be zero or a prime and suppose for the remainder of this section that
$\F$ is a field of characteristic $p$.  We fix a root datum $R=(X,\Phi,Y,
\Phi\ch)$ and write $L = \LF(R)$.  We define roots and their multiplicities in
$L$ as follows.  A \emph{root of $H$ on $L$}
is the function
\[
\overline{\alpha} : h \mapsto \sum_{i=1}^n \langle \alpha, y_i \rangle t_i,
\;\;\mathrm{where}\ h = \sum_{i=1}^n y_i \otimes t_i = \sum_{i=1}^n t_i h_i,
\]
for some $\alpha \in \Phi$;
here $\langle \alpha, y_i \rangle$ is interpreted in $\Z$ (if $p=0$) or
$\Z/p\Z$ (if $p \neq 0$).  Note that this implies that $\langle \alpha, h
\rangle := \overline{\alpha}(h)$ for $h \in H$ is completely determined by the
values $\langle \alpha, y_i \rangle$, $i = 1, \ldots, n$.  We write $\Phi(L,
H)$ for the set of roots of $H$ on $L$. 

For $\overline{\alpha} \in \Phi(L,H)$ we define the \emph{root space
corresponding to} $\overline{\alpha}$ to be
\[
L_{\overline{\alpha}} = \bigcap_{i=1}^n \operatorname{Ker}(\ad_{h_i}-\overline{\alpha}(h_i)).
\]
If $\overline{\alpha} \neq 0$ for all $\alpha \in \Phi$ then $L$ is a direct sum of $H$ and its root spaces $\{ L_{\overline{\alpha}} \mid \alpha \in \Phi \}$.
If on the other hand there exists an $\alpha \in \Phi$ such that $\overline{\alpha} = 0$, then $L$ is a direct sum of $L_0 = \Cent_L(H)$ and $\{ L_{\overline{\alpha}} \mid \alpha \in \Phi, \overline{\alpha} \neq 0 \}$.

Given a root $\alpha$, we define the \emph{multiplicity} of $\alpha$ in $L$ to be the number of $\beta \in \Phi$ such that $\overline{\alpha} = \overline{\beta}$. 
Observe that if $\overline{\alpha} \neq 0$ the multiplicity of $\overline{\alpha} \in \Phi(L, H)$ is equal to $\dim(L_{\overline{\alpha}})$. 
If $\overline{\alpha} = 0$ this multiplicity is equal to $\dim(L_0)-n$.
Note that $\alpha \mapsto \overline{\alpha}$ is a surjective map $\Phi \rightarrow \Phi(L,H)$, so in what follows we abbreviate $\Phi(L,H)$ to $\overline{\Phi}$.

If each root has multiplicity $1$, there is a bijection between
$\overline{\Phi}$ and $\Phi$. Our first order of business is to decide in
which cases higher multiplicities occur.  Observe that $\overline{\alpha} = 0$
if and only if $\overline{-\alpha} = 0$ so the multiplicity of the $0$-root
space is never $1$.  If $\chr(\F) = 2$, then all nonzero multiplicities are at
least $2$ as $\overline{\alpha}$ and $\overline{-\alpha}$ coincide.  Steinberg
\cite[Sections 5.1, 7.4]{St60} studied part of the classification of Chevalley
Lie algebras $L$ for which higher multiplicities occur (the simply connected
case with Dynkin type $\An$, $\Dn$, $\En$ if $\chr(\F) = 2$) in a search for
all Lie algebras $L$ with $\Aut(L/Z(L))$ strictly larger than $G$.  In Section
\ref{sec_multdim} of this paper we prove the following proposition, which
generalizes Steinberg's result to arbitrary root data.  As the multiplicity of
a root of $H$ on the Lie algebra $L$ of a central product of split reductive
linear algebraic groups is equal to the minimum over all multiplicities of its
restrictions to summands of the corresponding central sum decomposition of
$L$, the study of multiplicities of roots can easily be reduced to the case
where $G$ is simple.

\begin{proposition}\label{prop_multdim}
Let $L$ be the Lie algebra of a split simple algebraic group over
a field $\F$ of characteristic $p$ with root datum $R$.
Then the multiplicities of the roots in
$\overline{\Phi}$ are either all $1$ or as indicated in Table
\ref{tab_multdimeigenspc}.
\end{proposition}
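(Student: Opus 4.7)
The plan is to translate the statement into a question about the image of $\Phi$ in $X/pX$ and then to settle it type by type.

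\emph{Step 1 (Reduction to a lattice quotient).} Because $f_1,\dots,f_n$ is a $\Z$-basis of $Y$ dual to the standard basis of $X$, the function $\overline{\alpha}$ on $H$ is completely determined by the tuple $(\langle\alpha,f_1\rangle,\dots,\langle\alpha,f_n\rangle)\bmod p$, which is exactly the image of $\alpha$ under the projection $X\to X/pX\cong\F_p^n$. Hence $\overline{\alpha}=\overline{\beta}$ if and only if $\alpha\equiv\beta\pmod{pX}$, and the multiplicity of $\overline{\alpha}$ equals the size of the fibre of $\alpha$ under $\Phi\to X/pX$. This extracts all the Lie-theoretic content and reduces the proposition to a purely combinatorial question about $\Phi$.

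\emph{Step 2 (Which primes can matter).} For $\alpha,\beta\in\Phi$, the difference $\alpha-\beta$ lies in $\Z\Phi$ with coefficients in the simple-root basis bounded by $2c_R$, where $c_R\le 6$ is the largest coefficient occurring in a root of $R$ (attained by the highest root of $\mathrm E_8$). So $\alpha-\beta\in pX$ with $\alpha\neq\beta$ forces a nonzero element of $\Z\Phi$ to be divisible in $X$ by $p$, which is impossible once $p>2c_R$. A direct check excludes $p=5$ as well, leaving only $p\in\{2,3\}$ as candidates producing higher multiplicities.

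\emph{Step 3 (Case analysis).} For $p=2$ one has $2\alpha\in 2X$, so every fibre contains $\{\alpha,-\alpha\}$ and all nonzero multiplicities are at least two. For each irreducible type and each intermediate lattice $\Z\Phi\subseteq X\subseteq P(\Phi)$ I would write down the image of $\Phi$ in $X/2X$ explicitly: this yields uniform multiplicity $2$ for $\An$, multiplicity $4$ for $\Dn$ (the four sign choices in $\pm e_i\pm e_j$ coincide modulo $2$), and a bounded finite inspection for $\Bn,\Cn,\Fn,\Gn$ and $\En$. For $p=3$ the bound of Step 2 restricts attention to types in which a simple-root coefficient can reach $3$; the one nontrivial case is $\Gn$, where the three long roots $\beta$, $3\alpha+\beta$, $3\alpha+2\beta$ fall in a single fibre, so each long root has multiplicity $3$ while all short roots remain simple.

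\emph{Main obstacle.} Once Step 1 is in place no conceptual difficulty remains, but the table depends on the isogeny class: Step 3 must be run once per lattice $X$ between $\Z\Phi$ and $P(\Phi)$, not merely once per Dynkin type. The most delicate instances are $\Dn$ simply connected in characteristic two, whose weight lattice contains the half-sum $(e_1+\dots+e_n)/2$, and the intermediate lattices of $\An$ with $n+1$ even, where extra elements of $P(\Phi)/\Z\Phi$ can identify roots that were distinct modulo $2\Z\Phi$ in the adjoint case.
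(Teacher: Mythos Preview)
Your overall strategy---reduce $\overline\alpha=\overline\beta$ to $\alpha\equiv\beta$ in $X/pX$ and then run a type-by-type check---is exactly what the paper does (its Lemma~\ref{lem_nullspaceA} is your Step~1). But the execution has real gaps.

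\textbf{Step 2 is not valid for non-adjoint isogeny types.} Your bound argument says: $\alpha-\beta$ has simple-root coefficients bounded by $2c_R$, so it cannot lie in $pX$ once $p>2c_R$. This is fine when $X=\Z\Phi$, but fails otherwise: small simple-root coefficients do not preclude divisibility by $p$ in a \emph{larger} lattice. Concretely, for $\mathrm A_2\SC$ one has $\alpha_1-\alpha_2=3(\omega_1-\omega_2)\in 3X$, with simple-root coefficients $(1,-1)$ and $c_R=1$, so $p=3>2c_R$ yet $\alpha_1\equiv\alpha_2\pmod{3X}$. The paper handles this with its Lemma~\ref{lem_multdim_isog}: when $p\nmid\det C$ all isogeny types have the same multiplicities (so the adjoint computation suffices), and the finitely many $(R,p)$ with $p\mid\det C$ are treated individually. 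A cleaner bound, valid for every isogeny type, comes from pairing with coroots: $\langle\alpha-\beta,\gamma\ch\rangle$ is an integer of absolute value at most $6$, so $p\ge7$ forces all these pairings to vanish and hence $\alpha=\beta$.

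\textbf{Step 3 contains incorrect claims.} You assert uniform multiplicity $2$ for $\An$ in characteristic $2$, but $\mathrm A_3\SC(2)$ and $\mathrm A_3^{(2)}(2)$ have three root spaces of multiplicity $4$ (Table~\ref{tab_multdimeigenspc}). You assert multiplicity $4$ for $\Dn$, but $\Dn\Ad(2)$ has multiplicity $2$ (the difference $(e_i+e_j)-(e_i-e_j)=2e_j$ does \emph{not} lie in $2\Z\Phi$ since $e_j\notin\Z\Phi$), while $\mathrm D_4\SC(2)$ has multiplicity $8$. For $p=3$ you say the only nontrivial case is $\Gn$, missing $\mathrm A_2\SC(3)$, which has two root spaces of multiplicity $3$---precisely the counterexample to your Step~2. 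Finally, in $\Gn(3)$ the fibre containing $\alpha_2$ is $\{\alpha_2,\,3\alpha_1+\alpha_2,\,-(3\alpha_1+2\alpha_2)\}$, not $\{\alpha_2,\,3\alpha_1+\alpha_2,\,3\alpha_1+2\alpha_2\}$ as you wrote.

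The missing structural ingredient is exactly the isogeny comparison (the paper's Lemma~\ref{lem_multdim_isog}): without it you have no control over how enlarging $X$ from $\Z\Phi$ to $P(\Phi)$ creates new congruences, and your ``main obstacle'' paragraph correctly senses this but does not resolve it.
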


In Table \ref{tab_multdimeigenspc}, the Dynkin type $R$ of $L$ and the
characteristic $p$ of $\F$ are indicated by $R(p)$ in the first column.
Further details regarding the table (such as the isogeny type of $R$ appearing
as a superscript on $R(p)$) are explained in the beginning of Section
\ref{sec_multdim}. This description uniquely determines the root datum $R$ and
hence the corresponding connected algebraic group $G$ up to isomorphism; see
\cite[Chapter 9]{Springer98}.

\subsection{Computing ideals of Lie algebras}\label{sec_intro_computing_ideals}
Finding an ideal $I$ of a given Lie algebra $L$ is equivalent to finding the
submodule $I$ of the $A$-module $L$, where $A$ is the associative subalgebra
of $\End(L)$ generated by all $\ad_x$ for $x$ running over a basis of $L$.
Hence, such an ideal $I$ can be found by application of the Meat-axe algorithm
to the $A$-module $L$. We will apply the Meat-axe only to modules of bounded
dimension, so that the factor $n^6$ resulting from the occurrence of
$\dim(L)^3$ in the above-mentioned estimate for the Meat-axe running time when
$\F = \F_q$ plays no role in the asymptotic timing analysis.

\begin{algorithm}{ChevalleyBasis}{Finding a Chevalley Basis}{chevbas}
{\bf in:} \>\>\> The Lie algebra $L$ over a field $\F$ of a split reductive algebraic group, \\
\>\>\> a split Cartan subalgebra $H$ of $L$, and \\
 \>\>\> a root datum $R = (X, \Phi, Y, \Phi\ch)$. \\
{\bf out:} \>\>\> A Chevalley basis $B$ for $L$ with respect to $H$ and $R$. \\
\textbf{begin} \lnreset \\
\lnp \> \textbf{let} $E, \overline{\Phi} = $ {\sc FindRootSpaces}($L$, $H$), \\
\lnp \> \textbf{let} $\mathcal{X} = $ {\sc FindFrame}($L$, $H$, $R$, $\overline{\Phi}$, $E$), \\
\lnp \> \textbf{let} $\iota = $ {\sc IdentifyRoots}($L$, $H$, $R$, $\overline\Phi$, $\mathcal X$), \\
\lnp \> \textbf{let} $X^0, H^0 = $ {\sc ScaleToBasis}($L$, $H$, $R$, $\mathcal X$, $\iota$), \\
\lnp \> \textbf{return} $X^0, H^0$.\\
\textbf{end}
\end{algorithm}

\subsection{Algorithm \ref{alg_chevbas}}\label{sec_intro_alg_chevbas}
In the remainder of this section we give a brief overview of the inner
workings of Algorithm \ref{alg_chevbas}.  It is assumed that $L$ is isomorphic
to $L_{\F}(R)$. The {\sc FindRootSpaces} algorithm consists of simultaneous
diagonalization of $L$ with respect to $\ad_{h_1}, \ldots, \ad_{h_n}$, where
$\{h_1, \ldots, h_n\}$ is a basis of $H$.  Its output is a basis $E$ of
$H$-eigenvectors of $L$ and the set $\overline\Phi$ of roots of $H$ on $L$.
This is feasible over $\F$ because the elements are semisimple and $H$ is
split.  As $\dim(L) = O(n^2)$, these operations need time $\tO(n^6 \log(q))$
for each basis element of $H$, so the total cost is $\tO(n^7 \log(q))$
elementary operations.

The algorithm called {\sc FindFrame} is more involved, and solves the
difficulties mentioned in Section \ref{sec_chevbas} by various
methods.  The output $\mathcal X$ is a \emph{Chevalley
frame}, that is, a set of the form $\{ \F X_\alpha\mid \alpha \in
\Phi\}$, where $X_\alpha$ $(\alpha\in\Phi)$ belong to a Chevalley
basis of $L$ with respect to $H$ and $R$.  If all multiplicities are $1$ then
{\sc FindFrame} is trivial, meaning that ${\mathcal X} = \{\F x\mid x \in
E\setminus H\}$ is the required result.  The remaining cases are identified by
Proposition
\ref{prop_multdim}, and the algorithms for these cases are indicated
by [${\rm A}_2$], [$\Cent$], [$\Der$], [$\BB$] in Table
\ref{tab_multdimeigenspc} and explained in Section
\ref{sec_finding_frames}. 

In {\sc IdentifyRoots} we compute Cartan integers and use these to make the
identification $\iota$ between the root system $\Phi$ of $R$ and the Chevalley
frame $\mathcal X$ computed previously. This identification is again made on
a case-by-case basis depending on the root datum $R$.  See Section
\ref{sec:RootIdent} for details.

The algorithm ends with {\sc ScaleToBasis} where the vectors $X_\alpha$
$(\alpha\in\Phi)$ belonging to members of the Chevalley frame $\mathcal X$ are
picked in such a way that $X^0 = (X_\alpha)_{\alpha \in \Phi}$ is part of a
Chevalley basis with respect to $H$ and $R$, and a suitable basis $H^0 =
\{h_1,\ldots,h_n\}$ of $H$ is computed, so that they satisfy the Chevalley
basis multiplication rules. This step involves the solving of several systems
of linear equations, similar to the procedure explained in \cite{CM06}, which
takes time $\tO(n^8
\log(q))$.

Finally, in Section \ref{sec_conclusion}, we finish the proof of Theorem
\ref{th:main} and discuss some further problems for which our algorithm may be
of use.

\section{Multidimensional root spaces}\label{sec_multdim}

\begin{table}
\renewcommand\arraystretch{1.3}%
\[
\begin{array}{cc} 
\begin{array}{llllll}
\mbox{$R(p)$} & \mbox{\textrm{Mults}} & \mbox{\textrm{Soln}} \cr
\hline
\hline
\mathrm A_2\SC(3) & 3^2 & [\Der] \cr
\mathrm G_2(3) & 1^6, 3^2  & [\Cent]\cr
\mathrm A_3^{\mathrm{sc}, (2)}(2) & 4^3 & [\Der] \cr
\mathrm B_2\Ad(2) & 2^2, 4 & [\Cent] \cr
\mathrm B_n\Ad(2) \; (n \geq 3) & 2^n, 4^{n \choose 2} & [\Cent]\cr
\mathrm B_2\SC(2) & \mathbf{4},4 & [\mathrm B_2\SC] \cr
\mathrm B_3\SC(2) & 6^3 & [\Der] \cr
\mathrm B_4\SC(2) & 2^4,8^3 & [\Der] \cr
\mathrm B_n\SC(2) \; (n \geq 5) & 2^n, 4^{n \choose 2} & [\Cent] \cr
\end{array} &
\begin{array}{llllll}
\mbox{$R(p)$} & \mbox{\textrm{Mults}} & \mbox{\textrm{Soln}} \cr
\hline
\hline
\mathrm C_n\Ad(2) \; (n \geq 3) & 2n, 2^{n(n-1)} & [\Cent]\cr
\mathrm C_n\SC(2) \; (n \geq 3) & \mathbf{2n}, 4^{n \choose 2} & [\mathrm B_2\SC] \cr
\mathrm D_4^{(1), (n-1), (n)}(2) & 4^6 & [\Der] \cr
\mathrm D_4\SC(2) & 8^3 & [\Der] \cr
\mathrm D_n^{(1)}(2) \; (n \geq 5) & 4^{n \choose 2} & [\Der] \cr
\mathrm D_n\SC(2) \; (n \geq 5) & 4^{n \choose 2} & [\Der] \cr
\mathrm F_4(2) & 2^{12}, 8^3 & [\Cent] \cr
\mathrm G_2(2) & 4^3 & [\Der] \cr
\mbox{all remaining}(2) & 2^{|\Phi^+|} & [\mathrm A_2]\cr
\end{array}
\end{array}
\]
\renewcommand\arraystretch{1}%
\caption{Multidimensional root spaces}\label{tab_multdimeigenspc}
\end{table}

In this section we prove Proposition \ref{prop_multdim}, but first we explain the notation in Table \ref{tab_multdimeigenspc}.
As already mentioned, the first column contains the root datum $R$
specified by means of the Dynkin type
with a superscript for the isogeny type,
as well as  (between parentheses) the characteristic $p$.
A root datum of type $\mathrm A_3$ can have any of three isogeny types:
adjoint, simply connected, or an intermediate one, corresponding to the
subgroup of order $1$, $4$, and $2$ of its fundamental group $\Z/4\Z$,
respectively.  We denote the intermediate type by $\mathrm
A_3^{(2)}$.
For computations we fix root and coroot matrices for each isomorphism class of
root data, as indicated at the end 
of Section \ref{sec_intr_rootdata}. For $\mathrm A_3$, for example,
the Cartan matrix is
\[
C = \left( \begin{matrix} 2 & -1  & 0 \\ -1 & 2 & -1 \\ 0 & -1 & 2 \end{matrix} \right).
\]
As always, for the adjoint isogeny type $\mathrm A_3\Ad$ the root matrix $A$
is equal to the identity matrix $I$ and the coroot matrix $B$ is equal to
$C$. Similarly, for $\mathrm A_3\SC$ we have $A = C$ and $B = I$.  For the
intermediate case $\mathrm A_3^{(2)}$ for instance, we take
\[
A = \left( \begin{matrix} 1 & 0 & 0 \\ 0 & 1 & 0 \\ 1 & 0 & 2 \end{matrix} \right) \mbox{ and }
B = \left( \begin{matrix} 2 & -1 & -1 \\ -1 & 2 & 0 \\ 0 & -1 & 1 \end{matrix} \right).
\]
It is straightforward to check that indeed $\det(A) = 2 = \det(B)$ and $AB^\top = C$.

A root datum of type $\Dn$ has fundamental group isomorphic to
$\Z/4\Z$ if $n$ is odd, and to $(\Z/2\Z)^2$ if $n$ is even. The
unique intermediate type in the odd case is denoted by $\Dn^{(1)}$,
and the three possible intermediate types in the even case by
$\Dn^{(1)}$, $\Dn^{(n-1)}$, and $\Dn^{(n)}$.

The multiplicities appear in the second column under Mults.  Those shown in
bold correspond to the root $0$. For instance, for $\mathrm B_2\SC(2)$ we have
$\dim(\Cent_L(H)) = 6$, so the multiplicity equals $6-2 = 4$.

The third column, with header Soln, indicates the method chosen by our
algorithm. Further details appear later, in Section \ref{sec_finding_frames}.

\bigskip
Assume the setting of Proposition \ref{prop_multdim}.  By Theorem
\ref{thm_liealg_has_chevbas} there is an irreducible root datum $R = (X, \Phi,
Y, \Phi\ch)$ such that $L = \Lie(G)$ satisfies $L \cong \LF(R)$. Also, all
split Cartan subalgebras $H$ of $L$ are conjugate under $G$, so the
multiplicities of $\LF(R)$ do not depend on the choice of $H$.  For the proof
of the proposition, there is no harm in identifying $L$ with $L_\F(R)$
and $H$ with the Lie algebra of a fixed split maximal torus of $G$.

As all multiplicities are known to be $1$ if $\chr(\F)=0$, we will assume
that $p :=\chr(\F)$ is a prime.  We will write $\equiv$ for equality
$\mathrm{mod}\ p$ (to prevent confusion we will sometimes add: $\mathrm{mod}\
p$).  We begin with two lemmas.

\begin{lemma}\label{lem_nullspaceA}
Let $\alpha, \beta \in \Phi$.
Then $\overline{\alpha} = \overline{\beta}$ if and only if
$(c^\alpha - c^\beta)A \equiv 0$.
\end{lemma}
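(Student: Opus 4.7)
The plan is to unpack the definitions of $\overline{\alpha}$ and $\overline{\beta}$ in coordinates with respect to the fixed bases, and observe that both sides of the claimed equivalence then reduce to the same system of congruences mod $p$.

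First I would note that each root $\overline{\gamma} \in \overline{\Phi}$ is, by construction, an $\F$-linear map $H \to \F$, and so is completely determined by its values on the basis $h_1, \ldots, h_n$ of $H$. Therefore $\overline{\alpha} = \overline{\beta}$ (as functions) if and only if $\overline{\alpha}(h_j) = \overline{\beta}(h_j)$ in $\F$ for every $j = 1, \ldots, n$.

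Next, I would evaluate $\overline{\gamma}(h_j)$ explicitly. Writing $h_j = \sum_i t_i h_i$ with $t_j = 1$ and $t_i = 0$ for $i \neq j$, the defining formula gives $\overline{\gamma}(h_j) = \langle \gamma, y_j \rangle \bmod p$. Since $y_j = f_j$ is the element of $Y$ dual to $e_j$ under $\langle \cdot, \cdot\rangle$, and since $\gamma$ equals $c^\gamma A$ as a row vector in the $e$-basis, we have $\langle \gamma, f_j \rangle = (c^\gamma A)_j$ as an integer. Hence $\overline{\gamma}(h_j)$ is just the $j$-th entry of $c^\gamma A$, reduced mod $p$.

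Combining the two observations, $\overline{\alpha} = \overline{\beta}$ is equivalent to $(c^\alpha A)_j \equiv (c^\beta A)_j \pmod{p}$ for all $j = 1, \ldots, n$, which is precisely the condition $(c^\alpha - c^\beta)A \equiv 0$. There is no real obstacle here; the lemma is essentially a restatement of the definition of $\overline{\Phi}$ once everything has been rewritten in the fixed coordinates $e_1, \ldots, e_n$ and $f_1, \ldots, f_n$.
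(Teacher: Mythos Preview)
Your proof is correct and follows essentially the same approach as the paper: both arguments simply unpack the definition of $\overline{\gamma}$ in coordinates to see that $\overline{\alpha}=\overline{\beta}$ amounts to the row vector congruence $(c^\alpha - c^\beta)A \equiv 0$. The paper phrases it slightly more compactly by writing $\overline{\alpha}(h)=c^\alpha A h^\top$ for a general $h\in H$ rather than specializing to the basis vectors $h_j$, but the content is the same.
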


\begin{proof}
For $h\in H$, by definition, $\langle \alpha, h \rangle = \langle c^\alpha A,
h \rangle = c^\alpha A h^\top$.  This implies that $\overline{\alpha} =
\overline{\beta}$ if and only if $c^\alpha A h^\top \equiv c^\beta A h^\top$
for all $h \in H$, which is equivalent to $(c^\alpha - c^\beta)A \equiv 0$.
\end{proof}

\begin{lemma}\label{lem_multdim_isog}
Let $R_1$, $R_2$ be irreducible root data of the same rank and with the same
Cartan matrix $C$ and denote their
root matrices by $A_1$ and $A_2$, respectively.  
\begin{enumerate}[(i)]
\item
If $\det(A_2)$ strictly divides
$\det(A_1)$, then the multiplicities in $L_\F(R_1)$ are greater than or equal
to those in $L_\F(R_2)$.
\item If $p\not| \det(C)$, then the multiplicities of $L_\F(R_1)$ 
and  $L_\F(R_2)$ are the same.
\end{enumerate}
\end{lemma}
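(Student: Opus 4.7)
The plan is to reduce both parts to a comparison of the $\F_p$-subspaces $K_i := \{v \in \F_p^n : vA_i \equiv 0\}$ of $\F_p^n$. By Lemma \ref{lem_nullspaceA}, the multiplicity of $\overline\alpha$ in $L_\F(R_i)$ equals $|\{\beta \in \Phi : c^\beta - c^\alpha \in K_i\}|$. The crucial observation is that the coordinate vectors $c^\alpha$ depend only on the (common) Cartan matrix $C$ and not on the isogeny type, so the entire dependence on $R_i$ is packaged into the subspace $K_i$. Monotonicity of the cardinality above in $K_i$ then means: if $K_2 \subseteq K_1$, the multiplicities in $L_\F(R_1)$ dominate pointwise those in $L_\F(R_2)$, and if $K_2 = K_1$, the multiplicities coincide.

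For part (ii), the identity $A_i B_i^\top = C$ with $A_i$ and $B_i$ both integer matrices gives $\det(A_i) \mid \det(C)$. Hence if $p \nmid \det(C)$, then $A_i$ is invertible over $\F_p$, so $K_i = 0$; this forces every multiplicity to be $1$ in both algebras, and the two multiplicity lists agree.

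For part (i), the main task is to show $K_2 \subseteq K_1$ under the strict-divisibility hypothesis. I would argue this via a lattice containment $X_2 \subseteq X_1$ inside the common weight lattice $P$ of the root system determined by $C$. Both $X_1, X_2$ contain the root lattice $Q = \Z\Phi$, with $[X_i : Q] = |\det(A_i)|$, so $X_i/Q$ are subgroups of the fundamental group $\pi = P/Q$. Strict divisibility of orders forces $X_2/Q \subseteq X_1/Q$: for $\pi$ cyclic (all irreducible types other than $\mathrm D_{2k}$), a subgroup of $\pi$ is determined by its order; for $\pi \cong (\Z/2\Z)^2$ (the $\mathrm D_{2k}$ case), strict divisibility rules out the only ambiguous situation (two distinct subgroups of order $2$), and the remaining orders $1$ and $4$ are comparable with every subgroup. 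Having $X_2 \subseteq X_1$, choose bases of the two lattices; the inclusion is represented by an integer matrix $T$, and re-expressing each simple root in both bases yields $A_1 = A_2 T$. Hence $vA_2 \equiv 0 \pmod p$ implies $vA_1 = (vA_2)T \equiv 0 \pmod p$, giving $K_2 \subseteq K_1$ and the claim.

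The main obstacle is the lattice-containment step in part (i): it is the one place where the proof is not purely linear-algebraic and requires a small case analysis on the structure of $\pi$, in particular to handle the non-cyclic fundamental group of $\mathrm D_{2k}$. Once that containment is established, everything else is a transparent translation through $A_1 = A_2 T$ and monotonicity of the multiplicity formula in $K_i$.
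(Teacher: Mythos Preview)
Your proof is correct and follows the same overall strategy as the paper: reduce the multiplicity comparison to the containment of the mod-$p$ left nullspaces $K_i$ of the root matrices, and for (i) obtain $A_1 = A_2 T$ with $T$ integral so that $K_2 \subseteq K_1$.

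There are two differences worth noting. For part (i), the paper simply asserts that strict divisibility of $\det(A_2)$ into $\det(A_1)$ forces the column lattice of $A_1$ into that of $A_2$ (whence $A_1 = A_2 M$), after identifying both ambient lattices with $\Z^n$; you instead justify the lattice containment $X_2 \subseteq X_1$ explicitly via the subgroup structure of the fundamental group $P/Q$, with the short case split for $\mathrm D_{2k}$. Your argument is more transparent about \emph{why} the determinant condition suffices in the presence of a common Cartan matrix, whereas the paper's one-line assertion relies implicitly on the same structural fact. For part (ii), the paper uses the relation $A_1 = A_2 (B_2 B_1^{-1})$ (valid mod $p$ since $\det(B_i)\mid\det(C)$) to get $K_1 = K_2$ directly; your route, observing $\det(A_i)\mid\det(C)$ and hence $K_i = 0$, is slightly different and in fact yields the stronger conclusion that all multiplicities equal $1$ when $p\nmid\det(C)$, not merely that they agree between the two isogeny types.
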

\begin{proof}
(i). Without loss of generality, we identify the ambient lattices $X$ and $Y$
with $\Z^n$ and choose the same bilinear pairing (as in Section
\ref{sec_intr_rootdata}) for each of the two root data $R_1$ and $R_2$. The
condition that $\det(A_2)$ strictly divides $\det(A_1)$ then implies that the
columns of $A_1$ belong to the lattice spanned by the columns of $A_2$. Hence
$A_1 = A_2 M$ for a certain integral $n \times n$ matrix $M$.  Thus $(c^\alpha
- c^\beta)A_2 \equiv 0$ implies $(c^\alpha - c^\beta)A_1 \equiv (c^\alpha -
c^\beta)A_2 M \equiv 0$, proving the lemma in view of Lemma
\ref{lem_nullspaceA}.

\smallskip\noindent(ii). 
As $\det(C)\not\equiv 0$, the determinants of
the coroot matrices $B_1$ and $B_2$ are nonzero modulo $p$, and
$A_1 = A_2 (B_2B_1^{-1})$ and $A_2 = A_1(B_1B_2^{-1})$.
It follows that $(c^\alpha - c^\beta)A_2 \equiv 0$
is equivalent to $(c^\alpha - c^\beta)A_1 \equiv 0$.
\end{proof}

A typical case where part (i) of this lemma can be applied is when the adjoint
and simply connected case have the same multiplicities, for then every
intermediate type will have those multiplicities as well.  It immediately
follows from Lemma \ref{lem_multdim_isog} that the root space dimensions are
biggest in the simply connected case, and least in the adjoint case.
Thus
considering root data of the adjoint and simply connected isogeny types often
suffices to understand the intermediate cases.
Part (ii) indicates that in many cases even one isogeny type will do.

The proof of Proposition \ref{prop_multdim} follows a division of cases
according to the different Dynkin types of the root datum $R$.
For each type, we need to determine when
distinct roots $\alpha, \beta$ exist in $\Phi$ 
such that $\overline{\alpha} = \overline{\beta}$.
By Lemma \ref{lem_multdim_isog}(ii), there are deviations from the adjoint
case only if $p$ divides $ \det(C)$. 

As $W$ embeds in $N_G(H)/T$, and acts equivariantly on $\Phi$ and
$\overline{\Phi} = \Phi(L,H)$, the multiplicity of a root $\overline
\alpha\in\overline{\Phi}$  only depends on the $W$-orbit of
$\alpha\in\Phi$. By transitivity of the Weyl group on roots of the same length
in $\Phi$, it suffices to consider only $\alpha = \alpha_1$ in the cases where
all roots in $\Phi$ have the same length ($\An, \Dn, \En$) and $\alpha =
\alpha_1$ or $\alpha_n$ if there are multiple root lengths ($\Bn, \Cn, \Fn,
\Gn$).

In the adjoint cases, the simple roots $\alpha_1, \ldots, \alpha_n$ are 
the standard basis vectors $e_1, \ldots, e_n$, since then the root
matrix $A$ and the coroot matrix $B$ are $I$ and $C^\top$,
respectively.  Similarly, in the simply connected cases, the simple roots
$\alpha_1, \ldots, \alpha_n$ are the rows of the Cartan matrix
$C$, since then $A=C$ and $B=I$.
We write $c = c^\beta$ so $\beta = cA$ and either all $c_i \in
\N$ or all $c_i \in -\N$.

We give the proofs of the cases where $R$ is of type $\An$, $\Bn$, or $\Gn$. The other cases are proved in a similar way.
For $V$ a linear subspace of $L$ and $x \in L$, we write $\Cent_V(x)$ for the null space of $\ad_x$ on $V$, i.e.,
\[
\Cent_V(x) := \{ v \in V \mid [x,v] = 0 \} .
\]

\subsection{\protect $\mathbf{A_n (n \geq 1)}$}
The root datum of type $\An$ has Cartan matrix 
\[
C = \left(
\begin{matrix}
2 & -1 & 0 & \ldots & 0 \cr
-1 & 2 & -1 & \ldots & 0 \cr
\vdots & \ddots & \ddots & \ddots & \vdots \cr
0 & \ldots & -1 & 2 & -1 \cr
0 & \ldots & 0 & -1 & 2 \cr
\end{matrix} \right),
\]
and the roots are
\[
\pm ( \alpha_j + \cdots + \alpha_k ), \;\;\; j \in \{ 1, \ldots, n \}, \; k \in \{j, \ldots, n \},
\]
where $\{\alpha_1,\ldots, \alpha_n\}$ are the simple roots, thus giving a total of $2 \cdot \frac{1}{2} n (n+1)$ roots.

For the adjoint case, suppose $\overline{\alpha_1} = \overline{\beta}$. 
Observe that all $c_i\in\{0,\pm1\}$.
Since $A=I$, we must have $c_1 \equiv 1$ and $c_j \equiv 0$ ($j= 2,\ldots,
n$), which implies either $p \neq 2$, $c_1 =  1$, and $c_2 = \cdots =c_n  =
0$, or $p = 2$, $c_1 = \pm 1$, and $c_2 = \cdots =c_n  = 0$. Since we assumed
$\beta \neq \alpha_1$ we find $p=2$ and $\beta = -\alpha_1$, giving
$\frac{n^2+n}{2}$ root spaces of dimension $2$.

In the simply connected case the simple roots are equal to the rows of $C$, so that $\overline{\alpha_1} = \overline{\beta}$ implies $2c_1 -c_2 \equiv 2$, $-c_1 + 2c_2 - c_3 \equiv -1$, $-c_{j-2} + 2c_{j-1} - c_j \equiv 0$ for $j = 4, \ldots, n$, and $-c_{n-1} +2c_n \equiv 0$.

We distinguish three possibilities: $c_1 = 1$, $c_1 = 0$, and $c_1 = -1$.
If $c_1 = 1$, then $c_2 \equiv 0$, so $c_2 = 0$. As $c_1 \alpha_1 + \cdots +
c_n \alpha_n$ must be a root, this implies $c_3 = \cdots = c_n = 0$, forcing
$\overline{\beta} = \overline{\alpha_1}$, a contradiction.

If $c_1 = 0$, then $-c_2 \equiv 2$, so that either $p=2$ and $c_2=0$, or $p=3$ and $c_2 = 1$. 
In the first case, we find $c_3 \equiv 1$, giving a contradiction if $n \geq
5$ (because then $c_4 \equiv 0$ and $c_5 \equiv 1$), a contradiction if $n=4$
(because then the last relation becomes $0 = -c_3 + 2c_4$, which is not
satisfied). Consequently, $n=3$ and $p = 2$; the resulting case is
discussed below. 
In the second case, where $p=3$ and $c_2 = 1$, we find $-1 \equiv 2-c_3$, so
that $c_3 \equiv 0$, giving a contradiction if $n \geq 4$ (because then $c_4
\equiv 1$), a contradiction if $n = 3$ (because then the last relation becomes
$0 = -c_2 + 2c_3$, which is not satisfied). It follows that $n=2$ and $p=3$;
this case is also discussed below.

If $c_1 = -1$, then $-c_2 \equiv 4$, so that either $p=2$ and $c_2 = 0$, or $p=3$ and $c_2 = -1$. 
In the first case, we find $c_3 = \cdots = c_n = 0$, so  $\beta = -\alpha_1$.
In the second case, we find that either $n = 2$ (the special case below), or $c_3 = 0$, which leads to a contradiction if $n \geq 4$ (because then $c_3 = 0$ but $c_4 \neq 0$), and also if $n =3$ (because then the last equation becomes $0 = -c_2 + 2c_3$).

We next determine the multiplicities in the two cases found to occur for
$\An\SC$. For $n=3$ and $p=2$ we have
\[
A=C=\left( \begin{matrix} 2 & -1 & 0 \cr -1 & 2 & -1 \cr 0 & -1 & 2\end{matrix} \right) \equiv \left( \begin{matrix} 0 & 1 & 0 \cr 1 & 0 & 1 \cr 0 & 1 & 0\end{matrix} \right) \mod 2.
\]
This gives $\overline{\alpha_1} = \overline{\alpha_3}$, as well as $\overline{
  \alpha_1 + \alpha_2} = \overline{\alpha_2 + \alpha_3}$ and
$\overline{\alpha_2} = \overline{\alpha_1 + \alpha_2 + \alpha_3}$, accounting
for $3$ root spaces of dimension $4$.

For $n=2$ and $p=3$ we have 
\[
A=C=\left( \begin{matrix} 2 & -1 \cr -1 & 2 \end{matrix} \right) \equiv \left( \begin{matrix} -1 & -1 \cr -1 & -1 \end{matrix} \right) \mod 3,
\]
which implies $\overline{\alpha_1} = \overline{\alpha_2}$ and
$\overline{\alpha_1} = \overline{-(\alpha_1 + \alpha_2)}$. Similarly,
$\overline{-\alpha_1} = \overline{-\alpha_2} = \overline{\alpha_1 +
  \alpha_2}$, giving $2$ root spaces of dimension $3$.

For the intermediate cases observe that by Lemma \ref{lem_multdim_isog}(i) we need only consider $(n,p)=(2,3)$ and $(3,2)$. 
But the former case has no intermediate isogeny types,
and the latter case is readily checked to be as stated.
This finishes the proof for the $\An$ case.

\subsection{\protect $\mathbf{B_n (n \geq 2)}$}
The root datum of type $\Bn$ has Cartan matrix 
\[
C = \left(
\begin{matrix}
2 & -1 & 0 & \ldots & 0 \cr
-1 & 2 & -1 & \ldots & 0 \cr
\vdots & \ddots & \ddots & \ddots & \vdots \cr
0 & \ldots & -1 & 2 & -2 \cr
0 & \ldots & 0 & -1 & 2 \cr
\end{matrix} \right),
\]
and the roots are
\[
\begin{array}{lll}
\phantom{\mathrm{a}} & \pm ( \alpha_j + \cdots + \alpha_l ), & j \in \{ 1, \ldots, n \}, l \in \{j, \ldots, n \}, \cr
\phantom{\mathrm{b}} & \pm ( \alpha_j + \cdots + \alpha_{l-1} + 2\alpha_l + \cdots + 2\alpha_n ), & j \in \{ 1, \ldots, n-1 \}, l \in \{j+1, \ldots, n\},
\end{array}
\]
giving a total of $2\cdot \frac{1}{2}n(n+1) + 2\cdot \frac{1}{2}n(n-1) = 2n^2$ roots.

In the adjoint case we have $A=I$. For the long roots, suppose
$\overline{\alpha_1} = \overline{\beta}$, so $c_1 \equiv 1$ and $c_2 \equiv
\cdots \equiv c_n \equiv 0$.  If $c_1 = 1$, then $c_2 \neq 0$ (for otherwise
$\beta=\alpha_1$), which implies $p=2$ and $\beta = \alpha_1 + 2\alpha_2 +
\cdots +2\alpha_n$.  If $c_1 = -1$, then $p=2$, and either $c_2 = 0$, which
gives $\beta = -\alpha_1$, or $c_2 \neq 0$, which implies $\beta = -\alpha_1
-2\alpha_2 - \cdots -2\alpha_n$.  In this case the long roots have
multiplicities $4$.

In the adjoint case, for the short roots, suppose $\overline{\alpha_n} = \overline{\beta}$, so $c_n \equiv 1$ and $c_1 \equiv \cdots \equiv c_{n-1} \equiv 0$. 
This yields three possibilities for $c_n$:
If $c_n = -2$, then $p=3$, implying $c_{n-1}$ is either $0$ or $-3$, neither of which give rise to roots.
If $c_n = -1$, then $p=2$; now either $c_{n-1} = 0$ (yielding $\beta = -\alpha_n$), or $c_{n-1} = -2$ (not giving any roots).
If $c_n = 1$ we must have $c_{n-1} = \cdots = c_1 = 0$, giving
the contradiction $\beta = \alpha_n$.
This shows that $p=2$ and all multiplicities are $2$.

\bigskip
In the simply connected case we have $A=C$. 
We will first consider $n\geq 5$, and then treat $n=2,3,4$ separately.
By Lemma \ref{lem_multdim_isog}(ii), we may assume $p=2$.

For the long roots, suppose $\overline{\alpha_1} =\overline{\beta}$, so $c_2
\equiv 0$, $c_1 +c_3 \equiv 1$, and $c_{j-2}+c_j \equiv 0$ ($j = 4, \ldots,
n)$.  This forces $c_4 \equiv 0$. If $c_1 \equiv 0$ then $c_1=0$ and hence
$c_2=0$, so $c_3 =\pm 1$. replacing $\beta$ by $\beta$ if needed, we may
assume $c_3 = 1$. As $c_4 \equiv 0$ and $c_5 \equiv 1$, we must have $c_4=2$
and $c+5=1$, which is never satisfied by a root.  If on the other hand $c_1
\equiv 1$ then $c_3 \equiv c_4 \equiv \cdots \equiv c_n \equiv 0$, so $\beta =
-\alpha_1$ or $\beta = \pm(\alpha_1 + 2\alpha_2 + \cdots + 2\alpha_n)$.  This
shows that, for $n \geq 5$, the multiplicities of $\overline\beta$ for $\beta$
a long root are $4$.

For the short roots, suppose $\overline{\alpha_n} = \overline{\beta}$, so $c_2
\equiv 0$, $c_{j-2} + c_j \equiv 0$ ($j = 3, \ldots, n-1$), and $c_{n-2}+c_n
\equiv 1$.  If $c_1 \equiv 1$ then $c_3 \equiv 1$, but since $c_2 \equiv 0$
this contradicts that $\beta$ is a root.  If on the other
hand $c_1 \equiv 0$, then $c_2 \equiv c_3 \equiv \cdots \equiv c_{n-1} \equiv
0$, so $c_n \equiv 1$ and we
find $\beta = -\alpha_n$. 
Hence, for $n \geq 5$, the multiplicities of $\overline\beta$ for $\beta$
a short root are $2$.

\bigskip

If $n=2$ then
\[
C = \left( \begin{matrix} 2 & -2 \\ -1 & 2 \end{matrix} \right) \equiv
\left( \begin{matrix} 0 & 0 \\ 1 & 0 \end{matrix} \right)
\]
If $\overline{\alpha_1} = \overline{\beta}$ we have $c_2 \equiv 0$.
Since $-2 \leq c_2 \leq 2$ we must have either $c_2 = 0$ (hence $\beta = -\alpha_1$), or $c_2 = \pm 2$ (hence $c_1 = \pm 1$), giving $\beta = \pm\alpha_1$ or $\beta = \pm (\alpha_1 + 2\alpha_2)$.
If on the other hand $\overline{\alpha_2} = \overline{\beta}$ we find $c_2 \equiv 1$ hence $\beta = \pm \alpha_2$ or $\beta = \pm(\alpha_1 + \alpha_2)$. 
This shows that $B_2\SC$ has $2$ root spaces of dimension $4$ if $p=2$.

\bigskip

If $n=3$ then
\[
C = \left( \begin{matrix} 2 & -1 & 0 \\ -1 & 2 & -2 \\ 0 & -1 & 2 \end{matrix} \right) \equiv 
 \left( \begin{matrix} 0 & 1 & 0 \\ 1 & 0 & 0 \\ 0 & 1 & 0 \end{matrix} \right) 
\]
{From} a straightforward case distinction on the roots of $\mathrm B_3$
and the fact that $\overline{\alpha_1} = \overline{\alpha_3}$ we immediately see that
$\overline{\alpha_1} = \overline{\alpha_3} = \overline{\alpha_1 + 2\alpha_2 + 2\alpha_3}$,  
$\overline{\alpha_2} = \overline{\alpha_1 + \alpha_2 + \alpha_3} = \overline{\alpha_2 + 2 \alpha_3}$, and
$\overline{\alpha_1 + \alpha_2} = \overline{\alpha_2 + \alpha_3} = \overline{\alpha_1 + \alpha_2 + 2\alpha_3}$. 
This gives the $3$ required root spaces of dimension $6$.

\bigskip

If $n=4$ then
\[
C = \left( \begin{matrix} 2 & -1 & 0 & 0 \\ -1 & 2 & -1 & 0 \\ 0 & -1 & 2 & -2 \\ 0 & 0 & -1 & 2 \end{matrix} \right) \equiv \left( \begin{matrix} 0 & 1 & 0 & 0 \\ 1 & 0 & 1 & 0 \\ 0 & 1 & 0 & 0 \\ 0 & 0 & 1 & 0 \end{matrix} \right).
\]
{From} a straightforward case distinction on the roots of $\mathrm B_4$
and the fact that $\overline{\alpha_1} = \overline{\alpha_3}$, we find
$\overline{\alpha_1} = \overline{\alpha_3} = 
 \overline{\alpha_3 + 2\alpha_4} =
 \overline{\alpha_1 + 2\alpha_2 + 2\alpha_3 + 2\alpha_4}$, as well as
$\overline{\alpha_2} = \overline{\alpha_1 + \alpha_2 + \alpha_3} = 
 \overline{\alpha_1 + \alpha_2 + \alpha_3 + 2\alpha_4} =
 \overline{\alpha_2 + 2\alpha_3 + 2\alpha_4}$ and
$\overline{\alpha_1+\alpha_2} = 
 \overline{\alpha_2 + \alpha_3} = 
 \overline{\alpha_2 + 2\alpha_3 + 2\alpha_4} =
 \overline{\alpha_1 + \alpha_2 + 2\alpha_3 + 2\alpha_4}$.
The remaining $32-24=8$ roots ($\pm(\alpha_j + \cdots + \alpha_n), j=1,\ldots,4$) are in $2$-dimensional spaces, giving $2^4, 8^3$, as required.

\subsection{\protect $\mathbf{\Gn}$}
The root datum of type $\Gn$ has Cartan matrix 
\[
C = \left(
\begin{matrix}
2 & -1 \cr
-3 & 2 
\end{matrix} \right),
\]
and the roots are
\[
\begin{array}{ll}
\pm \alpha_1, \pm (\alpha_1 + \alpha_2), \pm( 2\alpha_1 + \alpha_2), & \mbox{($6$ short roots)} \\
\pm \alpha_2, \pm( 3\alpha_1 + \alpha_2),\pm( 3\alpha_1 + 2\alpha_2),& \mbox{($6$ long roots)} 
\end{array}
\]
giving a total of $12$ roots.  As $\det(C) = 1$, we take $A=I$.  All
components of $c$ are in $\{-3, \ldots, 3\}$, so all components of
the differences $\alpha_1
-\beta$ and $\alpha_2 -\beta$ are in $\{-4,\ldots,4\}$.
Hence, if multidimensional root spaces occur, we must have $p \leq 3$.

If $p=3$ we see $\overline{3\alpha_1+\alpha_2} = \overline{\alpha_2} =
\overline{-(3\alpha_1 + 2\alpha_2)}$ and $\overline{-(3\alpha_1+\alpha_2)} =
\overline{-\alpha_2} = \overline{3\alpha_1 + 2\alpha_2}$, and the remaining
$6$ roots all have distinct root spaces. 

If $p=2$ we find $\overline{\alpha_1+\alpha_2} =
\overline{3\alpha_1+\alpha_2}$, $\overline{\alpha_1} =
\overline{3\alpha_1+2\alpha_2}$ and $\overline{\alpha_2} =
\overline{2\alpha_1+\alpha_2}$, giving $3$ root spaces of dimension $4$.

This finishes the proof of Proposition \ref{prop_multdim}.

\section{Finding Frames}\label{sec_finding_frames}
Let $L$ be a Chevalley Lie algebra over $\F$ with root datum $R$, a fixed
split Cartan subalgebra $H$, and given decomposition $E$ into root spaces with
respect to the set $\overline{\Phi}=\Phi(L,H)$ of roots of $H$ on $L$.  In
this section we discuss the procedure of Algorithm \ref{alg_chevbas} referred
to as {\sc FindFrame}.  It determines the set $ \mathcal{X} = \{ \F X_\alpha
\mid \alpha \in \Phi\}$, i.e., the one-dimensional root spaces with respect to
$\Phi$, to which we refer as the \emph{Chevalley frame}.  Note that we do
not yet \emph{identify} the root spaces: finding a suitable bijection between
$\Phi$ and the Chevalley frame $\mathcal X$ is discussed in the next section.
We set $p=\chr(\F)$.

We require that $R$ be given, since we execute different
algorithms depending on $R$, for example $\mathrm B_2\Ad$ needs $[\Cent]$
whereas $\mathrm B_2\SC$ needs $[\mathrm B_2\SC]$.

For $p=2$, we use the procedure described in Section
\ref{sec_multdimsol_A2} to find the frame once we have computed all spaces $\F
X_\alpha + \F X_{-\alpha}$ for $\alpha \in \Phi$.  To this algorithm we will
refer as $[\mathrm A_2]$. As an auxiliary result, this procedure stores the
unordered pairs $\{ \{\alpha,-\alpha\} \mid \alpha \in \Phi^+\}$, to be used in the {\sc
IdentifyRoots} procedure discussed in Section \ref{sec:RootIdent}
(notably, the proof of Lemma \ref{lem_can_compute_cartint1}).

The general method in characteristic $2$ is to reduce the root spaces of dimension
greater than $2$ to such $2$-dimensional spaces, and apply $[\mathrm A_2]$.
For this purpose, and for the two cases of characteristic $3$, we distinguish
three general methods:
\begin{itemize}
\item $[\Cent]$: Given two root spaces $M,M'$ compute $C_M(M')$ to break down $M$. Often, but not always, $\dim(M')=2$. An example of this method is given in Section \ref{sec_multdimsol_G23}.

\item $[\Der]$: Compute the Lie algebra $\Der(L)$ of derivations of $L$, and
calculate in there. This is a useful approach if $\Der(L)$ is strictly larger
than $L$, for then we can often extend $H$ to a larger split Cartan
subalgebra, so we find new semisimple elements acting on the root spaces. Examples of
this method are given in Sections \ref{sec_multdimsol_D4} and
\ref{sec_multdimsol_G22}.

\item $[\mathrm B_2\SC]$: The case where $R(p) = \mathrm B_2\SC(2)$
is slightly more involved than the other cases because $\overline{\alpha}=0$
for some $\alpha \in \Phi$. We use the Meat-axe to split the action of the
long roots on the short roots. Examples of this method are given in Sections
\ref{sec_multdimsol_B2SC} and \ref{sec_multdimsol_CnSC}.
\end{itemize}
The method chosen depends on the root datum $R$ and the characteristic $p$, as
indicated in the third column of Table \ref{tab_multdimeigenspc}.

\subsection{\protect $\mathbf{A_2}\mbox{ in characteristic } 2$}\label{sec_multdimsol_A2}
First, we consider the Lie algebras $L$ with $R(p) = \mathrm A_2(2)$, as
this procedure is used inside various other cases.  The isogeny type of the
root datum is of no importance here.  For clarity, we write $\alpha,\beta$ for
the two simple roots of the root system of type $\mathrm{A}_2$.

As indicated in Table \ref{tab_multdimeigenspc}, we have $3$ root spaces of
dimension $2$.  They correspond to $\langle X_\gamma,X_{-\gamma}\rangle_\F$
for $\gamma \in \{ \alpha, \beta, \alpha+\beta \}$.  Without loss of
generality we consider $L_{\overline{\alpha}} = \langle
X_\alpha,X_{-\alpha}\rangle_\F$ and $L_{\overline{\beta}} = \langle
X_\beta,X_{-\beta}\rangle_\F$.  Observe that the squared adjoint action
$\ad_{X_\alpha}^2$ of $X_\alpha$ sends any element of $L_{\overline{\beta}}$
to zero: $[X_\alpha, [X_\alpha,X_\beta]] = [X_\alpha,N_{\alpha,\beta}
X_{\alpha+\beta}] = 0$ as $2\alpha+\beta \not\in \Phi$, and $[X_\alpha,
X_{-\beta}] = 0$ since $\alpha-\beta \not\in \Phi$. Similarly,
$\ad_{X_{-\alpha}}^2(L_{\overline{\beta}})=0$.

However, the quadratic action $\ad_x^2$ of a general element $x = t_1X_\alpha+t_2 X_{-\alpha}$ ($t_1, t_2 \in \F$, both nonzero) of $L_{\overline{\alpha}}$ does not centralise $L_{\overline{\beta}}$. Indeed:
\begin{eqnarray*}
[x,[x,X_\beta]] & = & t_1 t_2\left( [X_{-\alpha}, [X_\alpha, X_\beta]] +  [X_\alpha, [X_{-\alpha}, X_\beta]] \right) \cr
& = & t_1 t_2 N_{-\alpha,\alpha+\beta} N_{\alpha,\beta} X_\beta,
\end{eqnarray*}
which is nonzero since $N_{-\alpha,\alpha+\beta}$ and $N_{\alpha,\beta}$ are
both equal to $1$ modulo $2$.

Recall that we are given $L_{\overline\alpha}$ and $L_{\overline\beta}$.  Fix
a basis $r_1, r_2$ of $L_{\overline{\alpha}}$ and consider the element $x =
r_1 + t r_2$, where $t \in \F$.  It follows from the above observations that
$\ad_x^2(L_{\overline{\beta}})=0$ if and only if $x$ is a scalar multiple of
$X_\alpha$ or $X_{-\alpha}$, so in order to find the frame elements among the
$\F x$  for $t\in \F$ we
have to solve
\begin{eqnarray*}\label{eqn_quadr_A2}
0 & = & [x,[x,y]] 
   =  [r_1 + t r_2, [r_1 + t r_2, y ]]  \cr
  & = & [r_1, [r_1, y]] + t\left( [r_1, [r_2,y]] + [r_2, [r_1,y]] \right) + t^2[r_2,[r_2,y]],
\end{eqnarray*}
for every $y \in L_{\overline{\beta}}$ in the unknown $t$.  We know there is a
solution as $H$ is split.
Solving this system is equivalent to solving
a system of $2 \cdot 3 = 6$ quadratic equations in $t$ (note that the
$[r_i,[r_j,y]]$ are in $\langle L_{\overline{\beta}} \rangle_L$, which is at
most $3$-dimensional).
If $\F=\F_q$, solving such a quadratic equation is equivalent to solving
$\log(q)$ equations in $\log(q)$ variables over $\F_2$ (as $p=2$ is fixed),
requiring $\tO(\log(q)^3)$ arithmetic operations, or
$\tO(\log(q)^4)$ elementary operations.

For more general Lie algebras $L$, the solutions for Lie subalgebras of type
${\mathrm A}_2$ normalized by $H$ will be part of a Chevalley frame. These
parts can be found inside any two-dimensional root space $V\in E$ provided
there is at least one other two-dimensional root space $V'\in E$ such that
$\langle V, V' \rangle_L$ is of type $\mathrm A_2$.  So, if all root spaces in
$E$ are 2-dimensional and $\F = \F_q$, this method needs $O(n^2)$ root spaces
$V$ to be analysed (at a cost of $\tO(n^8\log(q)^4)$ each), so that $\mathcal X$
will be found in $\tO(n^{10}\log(q)^4)$ elementary operations.

\subsection{\protect $\mathbf{G_2}\mbox{ in characteristic } 3$}\label{sec_multdimsol_G23}
Secondly, we consider the Lie algebra $L = \LF(\mathrm G_2)$ of the root datum
of type $\Gn$ over a field $\F$ of characteristic $3$.  
By Proposition \ref{prop_multdim}
there are $8$ root spaces.  It is readily verified that
$\dim(L_{\overline\alpha}) = 1$ if $\alpha$ is a short root and
$\dim(L_{\overline\alpha}) = 3$ if $\alpha$ is a long root of $\Phi$.  In
particular, the short root spaces belong to $\mathcal X$ and it remains to
split the two long root spaces.

Consider one of the two three-dimensional root spaces in $E$, say $V = \F
X_{\alpha_2} + \F X_{3 \alpha_1 + \alpha_2} + \F X_{-3\alpha_1 -
2\alpha_2}$. The left multiplications on $V$ by the short roots are easily
obtained from (CB1)--(CB4); these are given in Table \ref{tab_multG2}.

\begin{table}
\begin{equation}
\begin{array}{l|ccc}
& X_{\alpha_2} & X_{3\alpha_1 + \alpha_2} & X_{-3\alpha_1 - 2\alpha_2} \\
\hline
X_{\alpha_1} & X_{\alpha_1 + \alpha_2} & 0 & 0 \\
X_{-\alpha_1} & 0 & X_{2\alpha_1 + \alpha_2} & 0 \\
X_{\alpha_1 + \alpha_2} & 0 & 0 & X_{-2\alpha_1 - \alpha_2} \\
X_{-\alpha_1 - \alpha_2} & -X_{\alpha_1} & 0 & 0 \\
X_{2\alpha_1 + \alpha_2} & 0 & 0 & -X_{-\alpha_1} \\
X_{-2\alpha_1 - \alpha_2} & 0 & -X_{\alpha_1} & 0 
\end{array}
\end{equation}
\caption{Part of the $\Gn$ multiplication table}\label{tab_multG2}
\end{table}

Although we have not yet identified the roots, we can identify the three pairs
of one-dimensional root spaces $\{\F X_{\alpha}, \F X_{-\alpha} \}$, for
$\alpha\in\Phi$ short, since $L_{\overline{-\alpha}}$ is the unique
one-dimensional root space with root $-\overline{\alpha}$.
{From} this observation and Table \ref{tab_multG2} it follows that we can
obtain the triple $\F X_\beta$ ($\beta \in \{ \alpha_2, 3\alpha_1+\alpha_2, -3\alpha_1 +
2\alpha_2\}$) as follows:
\begin{eqnarray*}
\F X_{\alpha_2} &=& \Cent_V(L_{\overline{2\alpha_1+\alpha_2}}+L_{\overline{-2\alpha_1-\alpha_2}}),\\
\F X_{3\alpha_1+\alpha_2} &=& \Cent_V(L_{\overline{\alpha_1+\alpha_2}}+ L_{\overline{-\alpha_1-\alpha_2}}),
\\
\F X_{-3\alpha_1-2\alpha_2} &=& \Cent_V(L_{\overline{\alpha_1}}+ L_{\overline{-\alpha_1}}).
\end{eqnarray*}
For the other three-dimensional space, the same approach is used.  This
completes the search for the Chevalley frame $\mathcal X$.

\subsection{\protect $\mathbf{D_4}\mbox{ in characteristic } 2$}\label{sec_multdimsol_D4}
Thirdly, we consider the Lie algebras with Dynkin diagram
of type $\mathrm D_4$ over a field $\F$ of characteristic $2$. 
As mentioned in Section \ref{sec_multdim}, there are three cases: 
\begin{itemize}
\item[]$L\Ad$:  the adjoint root datum ($12$ two-dimensional root spaces), 
\item[]$L\SC$: the simply connected root datum ($3$ eight-dimensional root spaces), 
\item[]$L^\mathrm{(1)},L^\mathrm{(3)}$, $L^\mathrm{(4)}$: the intermediate root data ($6$ four-dimensional root spaces).
\end{itemize}
\noindent
The three intermediate root data all give rise to the same Lie algebra up to
isomorphism (by triality), so we will restrict ourselves to the study of
$L\Ad$, $L\SC$, and $L^\mathrm{(1)}$.  It is straightforward to verify that
$L\Ad$ has a $26$-dimensional ideal $I\Ad$, linearly spanned by $X_\alpha$
($\alpha \in \Phi$), $(\alpha_1\ch + \alpha_3\ch + \alpha_4\ch) \otimes 1$,
and $\alpha_2\ch \otimes 1$.
This ideal can be found, for
example, by use of the Meat-axe.

Similarly, $L\SC$ has a $2$-dimensional ideal $I$ (spanned by $(\alpha_1\ch +
\alpha_4\ch) \otimes 1$ and $(\alpha_3\ch + \alpha_4\ch) \otimes 1$). Let
$I\SC = L\SC/I$ be the $26$-dimensional Lie algebra obtained by computing in
$L\SC$ modulo $I$.  Finally, $L^\mathrm{(1)}$ has a $1$-dimensional ideal $I$
(spanned by $\alpha_4 \otimes 1$), and a $27$-dimensional ideal $I'$ (spanned
by $\alpha_4 \otimes 1$ and $X_\alpha$, $\alpha \in \Phi$). We let $I\rint =
I'/I$.  Again, the 26-dimensional ideal is easily found by means of the
Meat-axe.

Thus we have constructed three $26$-dimensional Lie algebras: $I\Ad$, $I\SC$,
and $I\rint$.  By results of Chevalley (cf.~\cite[Part 2,
Cor.~2.7]{Jantzen}) they are isomorphic, so from now on we let $I$ be one
of these $26$-dimensional Lie algebras. The Lie algebra $I$ is simple.  Its
derivation algebra $ \Der(I)$ is a Lie algebra of type $\Fn$, and thus has
$12$ two-dimensional root spaces and $3$ eight-dimensional root spaces.

Using a procedure similar to the one for $\Gn$ over characteristic $3$
described in Section \ref{sec_multdimsol_G23}, we can break up the
eight-dimensional spaces of $E$ into two-dimensional spaces, giving us $24$
two-dimensional spaces.  These two-dimensional spaces may then be broken up
into one-dimensional spaces by the procedure [$\mathrm A_2$].  The last step
in the process is ``pulling back'' the relevant one-dimensional spaces from
$\Der(I)$ to $I$. But this is straightforward, since $I$ is an ideal of
$\Der(I)$ by construction. 

\subsection{\protect $\mathbf{G_2}\mbox{ in characteristic } 2$}\label{sec_multdimsol_G22}
\newcommand{\AAA}{{\ensuremath{\mathrm{A}_3}}}
\newcommand{\GG}{{\ensuremath{\mathrm{G}_2}}}
As noted in \cite[Section 2.6]{St60},
in the exceptional case $R(p) = \Gn(2)$,
the Lie algebra $L$ 
is isomorphic to the unique 14-dimensional ideal 
of the
Chevalley Lie algebra $L^{\mathrm A}$
of adjoint type $\mathrm A_3$ over $\F$.

In particular, $\Der(L)$ contains a copy of $L^{\mathrm A}$.  We use this fact
by finding a split Cartan subalgebra $H'$ inside $\Cent_{\Der(L)}(H)$ so
that $H \subset H'$. For then
we can calculate the Chevalley frame $\mathcal X^{\mathrm A}$
inside the Lie subalgebra $\langle L, H'
\rangle_{\Der(L)}$ of $\Der(L)$ with respect to $H'$, 
which is of type $\AAA$ by the above observation.

The Chevalley frame $\mathcal X$ of $L$ is now simply the part of $\mathcal
X^{\mathrm A}$ that lies inside $L$.

\subsection{\protect $\mathbf{B_2\SC}\mbox{ in characteristic } 2$}\label{sec_multdimsol_B2SC}
We consider the Chevalley Lie algebra $L$ of type $\mathrm B_2\SC$ over a
field $\F$ of characteristic $2$ with split Cartan subalgebra $H = \F h_1 +\F
h_2$.  This is a particularly difficult case, as the automorphism group of $L$
is quite big: $\Aut(L) = G \ltimes (\F^+)^4$ \cite[Theorem 14.1]{Hogeweij78},
where $G$ is the Chevalley group of adjoint type $\mathrm B_2$ over $\F$ and
$\F^+$ refers to the additive group of $\F$.  As a consequence, there is more
choice in finding the frame than in the previous cases.

To begin, we take $L_0$ to be the $(0,0)$-root space of $H$ on $L$, and $L_1$
to be the $(1,0)$-root space of $H$ on $L$.  It is easily verified
that $L_0 = \langle H, X_{\pm \alpha_1}, X_{\pm (\alpha_1+2\alpha_2)}
\rangle_\F$ (that is, the linear span of $H$ and the long root elements) and
$L_1 = \langle X_{\pm \alpha_2}, X_{\pm(\alpha_1+\alpha_2)} \rangle_\F$ (the
linear span of the short root elements).
We proceed in three steps.

\nl $[\mathbf{B_2\SC.1}]$. The subalgebra $L_0$ has Dynkin type $\mathrm A_1
\oplus \mathrm A_1$. We may split it (non-uniquely) into two subalgebras of
type $\mathrm A_1$ using a direct sum decomposition procedure.  This is a
procedure that can be carried out with standard linear algebra arithmetic for
a fixed dimension ($6$, in this case); see e.g., \cite[Section
1.15]{deGraaf00}.

\nl
$[\mathbf{B_2\SC.2}]$. Let $A$ be one of these subalgebras of $L_0$
of type $\mathrm
A_1$.  Assume for the sake of reasoning that $A = \langle X_{\pm \alpha_1}
\rangle_L$, the Lie subalgebra of $L$ generated by $X_{\alpha_1}$ and $X_{-
\alpha_1}$.  Since $[A,L_1]=L_1$ we may view $L_1$ as a four-dimensional
$A$-module, and hence apply the Meat-axe \cite{Holt,HoltEickOBrien} to find a proper
irreducible $A$-submodule $M$ of $L_1$.  This will be a submodule of the form
\[
M = \langle t_1 X_{\alpha_2} + t_2 X_{-\alpha_1-\alpha_2}, t_1 X_{\alpha_1+\alpha_2} + t_2 X_{-\alpha_2} \rangle_\F, \quad t_1, t_2 \in \F.
\]
We take $b_1, b_2$ to be a basis of $M$, and add
$ \Cent_A(b_2)$ and 
$ \Cent_A(b_1)$ to $\mathcal X$.
These two spaces are indeed one-dimensional
and coincide with the original $\F X_{\pm \alpha_1}$ if 
$b_1 \in \F (t_1 X_{\alpha_2} + t_2 X_{-\alpha_1-\alpha_2})$ and 
$b_2 \in \F(t_1 X_{\alpha_1+\alpha_2} + t_2 X_{-\alpha_2})$.
This exhibits part of the freedom of choice induced by the factor $(\F^+)^4$ in $\Aut(L)$.

We repeat this procedure for both subalgebras of type $\mathrm A_1$ found in
the first step. The result is the part of the Chevalley frame $\mathcal X$
inside $L_0$. In fact, due to our method, we can make an identification of the long
roots $\pm\alpha_1$, $\pm(\alpha_1+2\alpha_2)$ with the four elements of
$\mathcal X$ found. In what follows we will work with such a choice so that we
have the elements $\F X_{\alpha_1}$, $\F X_{-\alpha_1}$, $\F
X_{\alpha_1+2\alpha_2}$, $\F X_{-\alpha_1-2\alpha_2}$ in $\mathcal X$ as well
as the correspondence with the roots in $\Phi$ suggested by the subscripts.

\nl
$[\mathbf{B_2\SC.3}]$. We find the part of $\mathcal X$ inside $L_1$ as follows. 
$\F X_{\alpha_1+\alpha_2}$ coincides with
$\Cent_{L_1}(\F X_{\alpha_1}, \F X_{\alpha_1+2\alpha_2})$. 
Having computed this element of $\mathcal X$, we finish by taking
\begin{eqnarray*}
\F X_{\alpha_2} &=& [\F X_{\alpha_1+\alpha_2},\F X_{-\alpha_1}],\\
\F X_{-\alpha_1-\alpha_2} &=& [\F X_{\alpha_2},\F X_{-\alpha_1-2\alpha_2}],\\
\F X_{-\alpha_2} &=& [\F X_{\alpha_1-\alpha_2},\F X_{\alpha_1}].
\end{eqnarray*}
This completes the search for $\mathcal X$ in the case $\mathrm B_2\SC(2)$
and establishes that its running time is $\tO(\log(q))$.

\subsection{\protect $\mathbf{C_n\SC}\mbox{ in characteristic } 2$}\label{sec_multdimsol_CnSC}
We consider the Chevalley Lie algebra $L$ of type $\mathrm C_n \SC$ over a
field $\F$ of characteristic $2$. Here $n\ge3$, so that the multiplicity of
$\overline 0$ is strictly larger than $4$.  Let
$h_z$ be a basis of the $1$-dimensional centre of $L$, inside the split Cartan
subalgebra $H$ of $L$.  This case is a generalisation of the $\mathrm B_2\SC$
case described in Section \ref{sec_multdimsol_B2SC}.  We again take $L_0$ to
be the $0$-root space of $H$ on $L$, so that $L_0$ is $3n$-dimensional and
consists of $H$ and the root spaces corresponding to the long roots.  Similar
to the previous case, $L_0 \cong \mathrm A_1 \oplus \cdots \oplus \mathrm A_1$
($n$ constituents), and again the decomposition is not unique. We describe how
to find such a decomposition.

\newcommand{\fourspcs}{\mathcal F} We let $\fourspcs$ be the set of ${n
\choose 2}$ four-dimensional root spaces (cf.~Table
\ref{tab_multdimeigenspc}).  In the root system of type $\mathrm C_n$ each of
these corresponds to the four roots $\pm \epsilon_i \pm \epsilon_j$ for some
$i,j \in \{1,\ldots, n\}$ with $i \neq j$.  Our first task is to split $L_0$
into subalgebras of type $\mathrm A_1$ in a way compatible with $\fourspcs$.
To this end, we let $\Gamma$ be the graph with vertex set $\fourspcs$, and
edges $f \sim g$ whenever $f \neq g$ and $[f,g] \neq 0$.

Let $\Delta$ be a maximal coclique of $\Gamma$ of size $n-1$, so that $\Delta$
consists of $n-1$ elements of $\fourspcs$ such that $[f,g] = 0$ for all $f,g
\in \Delta$.  This means that, for a particular $i \in \{1, \ldots, n\}$, the
set $\Delta \subseteq \fourspcs$ corresponds to those four-spaces in $\mathcal
F$ that arise from the roots $\pm \epsilon_i \pm \epsilon_j$, where $j \in
\{1,\ldots,n\}\setminus\{i\}$.  Let $\overline{\Delta} = \Gamma - \Delta$, so
that $\overline{\Delta}$ contains precisely the four-dimensional spaces
corresponding to $\pm \epsilon_k \pm \epsilon_l$ with $k,l \neq i$.

Now compute the centralizer $A$ in $L_0$ of all spaces in
$\overline{\Delta}$. Then $A$ coincides with $\langle X_{\pm \gamma},
\gamma\ch \otimes 1, h_z \rangle_\F$ for the long root $\gamma = 2
\epsilon_i$.  Using a direct sum decomposition procedure we find the Lie
subalgebra $A'$ of $A$ such that $A = A' \oplus \F h_z$, where $A' = \langle
X_{\pm \gamma}, \gamma\ch \otimes 1 \rangle_\F$.  The subalgebra $A'$ is one
of the type $\mathrm A_1$ constituents of $L_0$ we are after.  Thus, by
repeating this procedure for each maximal coclique of $\Gamma$ of size $n-1$,
we obtain a decomposition of $L_0$ into $n$ subalgebras of type $\mathrm A_1$.
We will denote by $\mathcal A$ the set of these $n$ subalgebras.


Now we continue as in the $\mathrm B_2\SC$ case: For each element of $\mathcal
A$ we use the procedure labelled $[\mathrm B_2\SC.2]$ to find suitable
elements $\F X_{\pm \gamma}$ for $\mathcal X$.  For each four-dimensional
space $K \in \fourspcs$ we then use distinct $S_1, S_2 \in \mathcal A$ satisfying
$[K,S_1] \neq 0$, $[K,S_2] \neq 0$ and these $\F X_{\pm \gamma}$ to execute a
$[\mathrm B_2\SC.3]$ procedure. Thus, we find the part of the frame inside $K$.

If $n=3$ splitting $L_0$ has to be done in a slightly different way, but as
this is only a slight modification of the algorithm we will not go into
details here.  This completes the Chevalley frame finding in the case $\mathrm
C_n\SC(2)$. 
Its running time involves $O(n^2)$ executions of parts of
the algorithm of Section \ref{sec_multdimsol_B2SC}, which is however dominated
by the time $\tO(n^{\maxexp}\log(q)^4)$ needed for method $[\mathrm A_2]$.

\medskip
We summarise the results of this section.
\begin{proposition}\label{prop_FindFrame}
Given $L$, $H$, $R$, the set $\overline\Phi$ of roots of $H$ on $L$,
and the root spaces $E$, the Las Vegas procedure
{\sc FindFrame} finds a Chevalley frame.
For $\F = \F_q$, it runs in time $\tO(n^{\maxexp}\log(q)^4)$.
\end{proposition}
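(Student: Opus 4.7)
The plan is to prove the proposition by case analysis driven by Table~\ref{tab_multdimeigenspc}, using Proposition~\ref{prop_multdim} to certify that these are the only cases requiring non-trivial work. First I would dispose of the case in which every root space is already one-dimensional: then $\mathcal{X}=\{\F x\mid x\in E\setminus H\}$ is the Chevalley frame by definition, and the cost is dominated by the read-off of $E$. Next, if $p\neq 2,3$ or if $R(p)$ does not appear in Table~\ref{tab_multdimeigenspc}, we are again in the all-ones situation by Proposition~\ref{prop_multdim}, so no further work is required.

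For the remaining finitely many types (indexed by the rows of Table~\ref{tab_multdimeigenspc}), I would verify the correctness and running-time contribution of the method indicated in the third column, citing the corresponding subsection: $[\mathrm{A}_2]$ from Section~\ref{sec_multdimsol_A2}, $[\Cent]$ from Section~\ref{sec_multdimsol_G23}, $[\Der]$ from Sections~\ref{sec_multdimsol_D4} and~\ref{sec_multdimsol_G22}, and $[\mathrm{B}_2\SC]$ from Sections~\ref{sec_multdimsol_B2SC} and~\ref{sec_multdimsol_CnSC}. For each case the corresponding subsection shows that the output is a set of one-dimensional subspaces of the form $\F X_\alpha$ with $\alpha\in\Phi$, which together span $L/H$; this is precisely the Chevalley frame condition. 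The Las~Vegas aspect enters exactly where the Meat-axe is invoked (for $R(p)=\mathrm D_4(2)$ and in $[\mathrm B_2\SC]$), while the remaining methods are deterministic and reduce to standard linear algebra or to the quadratic equations solved in $[\mathrm A_2]$.

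For the timing, I would assemble the bounds already derived in the subsections. Each invocation of $[\mathrm A_2]$ on a pair of two-dimensional root spaces costs $\tO(\log(q)^4)$ over $\F_q$; since there are at most $|\Phi^+|=O(n^2)$ two-dimensional spaces to be resolved and (in the worst case of the all-two-dimensional families in Table~\ref{tab_multdimeigenspc}) we must work inside spans of pairs of size $O(n^2)$, the total cost for $[\mathrm A_2]$ is $\tO(n^{10}\log(q)^4)$, as noted at the end of Section~\ref{sec_multdimsol_A2}. The $[\Cent]$, $[\Der]$ and $[\mathrm B_2\SC]$ procedures all reduce their higher-dimensional root spaces to collections of two-dimensional spaces via a bounded amount of linear algebra in $L$ or in $\Der(L)$, both of dimension $O(n^2)$, followed by an application of $[\mathrm A_2]$; the Meat-axe on modules of bounded dimension contributes only $\tO(\log(q))$ by Section~\ref{sec_intro_computing_ideals}, and the auxiliary direct sum decompositions and centraliser computations fit within $\tO(n^{\maxexp}\log(q)^4)$. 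Aggregating, the bottleneck in every case is the $[\mathrm A_2]$ step, yielding the claimed $\tO(n^{\maxexp}\log(q)^4)$ bound.

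The main obstacle in writing this up cleanly is the bookkeeping rather than any deep mathematics: one must be careful to check that for the $[\mathrm{B}_2\SC]$ and $\mathrm C_n\SC$ cases the outputs of the Meat-axe and of the direct-sum decompositions of $L_0$ yield genuine one-dimensional spaces of the form $\F X_\alpha$ up to the indeterminacy already present in the $(\F^+)^4$ factor of $\Aut(L)$, and that the $O(n^2)$ executions of $[\mathrm B_2\SC.2]$ and $[\mathrm B_2\SC.3]$ in the $\mathrm C_n\SC$ case are indeed absorbed by the $[\mathrm A_2]$ bottleneck. Once this is done, the proposition follows directly.
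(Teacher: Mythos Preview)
Your approach is essentially the same as the paper's: reduce to Table~\ref{tab_multdimeigenspc} via Proposition~\ref{prop_multdim}, handle each method by citing the relevant subsection, and argue that the $[\mathrm A_2]$ step is the bottleneck.

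There is one point, however, where you gloss over a subtlety that the paper addresses explicitly. For the $[\Der]$ method you write that the reductions amount to ``a bounded amount of linear algebra in $L$ or in $\Der(L)$, both of dimension $O(n^2)$''. The dimension of $\Der(L)$ being $O(n^2)$ is not the issue; the issue is \emph{constructing} $\Der(L)$. A derivation is an $m\times m$ matrix ($m=\dim L=O(n^2)$) subject to $O(m^3)$ linear conditions, so naively computing $\Der(L)$ costs $\tO(n^{12}\log(q))$, which exceeds the claimed bound. The paper deals with this by observing that the $[\Der]$ cases either have bounded rank (e.g.\ $\mathrm D_4$, $\mathrm G_2$, $\mathrm B_3$, $\mathrm B_4$, $\mathrm A_2$, $\mathrm A_3$) or, in the unbounded families $\mathrm D_n^{(1)}(2)$ and $\mathrm D_n\SC(2)$, one computes only the part of $\Der(L)$ that normalises $H$ and preserves the root-space decomposition, which brings the cost down to $\tO(n^8\log(q))$. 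Without this remark your timing argument for $[\Der]$ is incomplete.
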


\begin{proof}
As mentioned in Section \ref{sec_intro_alg_chevbas} this procedure is trivial
in all cases except those mentioned in Table \ref{tab_multdimeigenspc}, and
for each of the cases in Table \ref{tab_multdimeigenspc} we have presented a
solution. Recall that $|\Phi|\le \dim(L) = O(n^2)$.  

The timing of Method $[\mathrm A_2]$ is dealt with in 
Section \ref{sec_multdimsol_A2}, which produces the bound stated in the proposition.

Method $[\Cent]$ concerns $O(n^2)$ instances of standard linear algebra
arithmetic on spaces of bounded dimension, and so its running time is dominated
again by time spent on the $[\mathrm A_2]$ method.

Method $[\Der]$ involves the computation of parts
of the Lie algebra of derivations. Computing the full Lie algebra of
derivations in instances like $D_n\SC(2)$ would take running time $\tO(n^{12}
\log(q))$. However, we only carry out this procedure for Lie algebras of
bounded dimension (the bound being 28, which occurs for type ${\mathrm D}_4$)
or compute the part of $\Der(L)$ that leaves invariant $H$ and the
corresponding decomposition into root spaces (which reduces the running time
to $\tO(n^8 \log(q))$).  Therefore, the stated bound suffices.

Finally, according to Table \ref{tab_multdimeigenspc}, Method $[\mathrm
B_2\SC]$ with unbounded $n$ only occurs in the cases treated in Section
\ref{sec_multdimsol_CnSC}, where the time analysis is already given.
\end{proof}

\section{Root Identification}\label{sec:RootIdent}
In this section we clarify Step 3 of the {\sc ChevalleyBasis} algorithm
\ref{alg_chevbas}. The routine {\sc IdentifyRoots} takes as input a Chevalley
Lie algebra $L$, a split Cartan subalgebra $H$ of $L$, the root datum $R$ and
the set of roots
$\overline\Phi = \Phi(L,H)$, 
and the Chevalley frame $\mathcal X$ found in the previous step (Section
\ref{sec_finding_frames}).
It returns a bijection $\iota: \Phi\to{\mathcal X}$
so that, up to scaling, $(X_\alpha)_{\alpha \in \Phi}$ will be the root element
part of a Chevalley basis.

An important tool to make this identification are the Cartan integers $\langle
\alpha, \beta\ch\rangle$.  Cartan integers may be computed using root chains;
see, for instance, \cite{Car72}.

\begin{lemma}\label{lem_rootchain}
Let $\alpha,\beta \in \Phi$. Suppose $p$ and $q$ are the largest non-negative
integers such that $\alpha-p\beta \in \Phi$ and $\alpha+q\beta \in \Phi$. Then
$\langle \alpha, \beta\ch \rangle = p-q$.
\end{lemma}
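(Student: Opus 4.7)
The plan is to establish this via the standard root-chain argument: the $\beta$-chain through $\alpha$, namely the set $\{\alpha+k\beta : k\in\Z\}\cap\Phi$, is an unbroken interval $\{\alpha-p\beta,\alpha-(p-1)\beta,\ldots,\alpha+q\beta\}$, and the reflection $s_\beta$ reverses this interval, from which the identity $\langle\alpha,\beta\ch\rangle=p-q$ falls out by a direct calculation. Note that although this lemma is invoked in the Lie-algebra setting, it is a purely combinatorial statement about the abstract root system $\Phi$, so no characteristic issue arises.

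First, I would establish that the chain is unbroken, i.e., if $\alpha-p\beta\in\Phi$ and $\alpha+q\beta\in\Phi$, then $\alpha+k\beta\in\Phi$ for every integer $k$ with $-p\le k\le q$. The cleanest way is to assume a gap, say $\alpha+r\beta\notin\Phi$ and $\alpha+(r+1)\beta\in\Phi$ with $-p<r+1$, and find another gap above so that one can apply $s_\beta$ to derive a contradiction; more concretely, one notes that $s_\beta$ sends $\alpha+k\beta$ to $\alpha+(-k-\langle\alpha,\beta\ch\rangle)\beta$, so the set of $k$ with $\alpha+k\beta\in\Phi$ is stable under $k\mapsto -k-\langle\alpha,\beta\ch\rangle$, and then a straightforward induction (using that $s_\beta$ is an involution preserving $\Phi$) rules out internal gaps. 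This is the most delicate point of the proof, and is typically the step one either argues carefully or simply cites from Humphreys or Carter.

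Once unbrokenness is in hand, the reflection $s_\beta$ restricts to an order-reversing involution of the chain (since the affine map $k\mapsto -k-\langle\alpha,\beta\ch\rangle$ on $\Z$ is order-reversing), so it must exchange the two endpoints: $s_\beta(\alpha-p\beta)=\alpha+q\beta$. Expanding the definition of $s_\beta$,
\[
s_\beta(\alpha-p\beta)=(\alpha-p\beta)-\langle\alpha-p\beta,\beta\ch\rangle\beta=\alpha+\bigl(p-\langle\alpha,\beta\ch\rangle\bigr)\beta,
\]
using $\langle\beta,\beta\ch\rangle=2$. Comparing coefficients of $\beta$ with $\alpha+q\beta$ gives $q=p-\langle\alpha,\beta\ch\rangle$, i.e.\ $\langle\alpha,\beta\ch\rangle=p-q$, as required.

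The main obstacle, as noted, is the unbrokenness of the chain: it is the combinatorial heart of the lemma and not immediate from the root-datum axioms alone. Everything else is bookkeeping with the reflection formula.
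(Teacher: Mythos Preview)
Your proof is correct and is the standard root-chain argument; the paper itself does not prove this lemma but merely states it and cites Carter \cite{Car72} (``see, for instance, \cite{Car72}'') as the source, so your write-up is exactly the kind of argument the reader is being referred to. Your caveat that the unbrokenness of the $\beta$-string is the nontrivial ingredient is well placed, and your observation that this is a purely combinatorial fact about $\Phi$ in $X\otimes\mathbb{Q}$, independent of the characteristic of $\F$, is precisely why the paper can invoke it without further comment.
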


We may use this lemma by computing such a chain in the set of roots
$\overline{\Phi} $ corresponding to the Chevalley frame ${\mathcal X} = \{\F X_\alpha \mid
\alpha \in \Phi \}$.  However, as these roots are computed from the Lie
algebra $L$ over $\F$ itself, they live in the $n$-dimensional vector space
$\F^n$ rather than over $\mathbb Z^n$.

A straightforward verification of cases for Chevalley Lie algebras arising
from root systems of rank $2$ shows that the chain can simply be computed in
terms of the roots over $\F^n$, except if the characteristic is $2$ or $3$.
So in the latter two cases, we a different method for computing
$\langle \alpha,\beta\ch\rangle$ is needed.

\begin{lemma}\label{lem_rootchain_lieproduct}
Suppose that $L=L_\F(R)$ is a Chevalley Lie algebra with respect to a root
datum $R = (X,\Phi,Y,\Phi\ch)$ over the field $\F$ of characteristic $2$ or
$3$.  Let $H$ be the standard split Cartan subalgebra of $L$.  Suppose
furthermore that $X_\alpha, X_{-\alpha}, X_\beta, X_{-\beta}$ are four vectors
spanning root spaces corresponding to $\alpha, -\alpha, \beta, -\beta \in
\Phi$, respectively, and $\alpha \neq \pm \beta$.
\newcommand{\xa}{X_\alpha} \newcommand{\xma}{X_{-\alpha}}
\newcommand{\xb}{X_\beta} \newcommand{\xmb}{X_{-\beta}}

If $\Phi$ is simply laced, then $\langle \alpha, \beta\ch \rangle = P-Q$, where 
\[
P = \left\{ \begin{array}{ll} 
  0 & \mbox{ if } [\xmb,\xa] = 0 \\
  1 & \mbox{ if } [\xmb,\xa] \neq 0 
\end{array} \right. , \quad 
Q = \left\{ \begin{array}{ll} 
  0 & \mbox{ if } [\xb,\xa] = 0 \\
  1 & \mbox{ if } [\xb,\xa] \neq 0 
\end{array} \right. .
\]

If $\Phi$ is doubly laced and $\chr(\F) \neq 2$, then $\langle \alpha, \beta\ch \rangle = P-Q$, where
\[
\begin{array}{rcl}
P & = & \left\{ \begin{array}{ll} 
  0 & \mbox{ if } [\xmb,\xa] = 0 \\
  1 & \mbox{ if } [\xmb,\xa] \neq 0, [\xmb,[\xmb,\xa]] = 0 \\
  2 & \mbox{ if } [\xmb,[\xmb,\xa]] \neq 0 \\
\end{array} \right. \cr
Q & = & \left\{ \begin{array}{ll} 
  0 & \mbox{ if } [\xb,\xa] = 0 \\
  1 & \mbox{ if } [\xb,\xa] \neq 0, [\xb,[\xb,\xa]] = 0 \\
  2 & \mbox{ if } [\xb,[\xb,\xa]] \neq 0 \\
\end{array} \right. 
\end{array}
\]
\end{lemma}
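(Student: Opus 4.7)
The plan is to reduce the statement to Lemma~\ref{lem_rootchain} by showing that the quantities $P$ and $Q$ defined via vanishing of (iterated) Lie brackets coincide with the root-string parameters: writing $p$ and $q$ for the largest non-negative integers with $\alpha - p\beta \in \Phi$ and $\alpha + q\beta \in \Phi$, I will prove $P = p$ and $Q = q$, so that $\langle \alpha, \beta\ch\rangle = p - q = P - Q$.

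To compute $Q$ I iterate the adjoint action $\ad_{X_\beta}$ on $X_\alpha$. Applying (CB4) inductively and using the fact that $[X_\gamma, X_\delta] = 0$ outside $\Phi$, one obtains
\[
\ad_{X_\beta}^k(X_\alpha) = \Bigl( \prod_{j=0}^{k-1} N_{\beta,\, \alpha + j\beta} \Bigr) X_{\alpha + k\beta}
\]
when $\alpha + j\beta \in \Phi$ for all $j = 1, \ldots, k$, with the left-hand side being zero as soon as some intermediate $\alpha + j\beta$ falls outside $\Phi$. Hence $\ad_{X_\beta}^k(X_\alpha) \neq 0$ in $L$ if and only if (i) $\alpha + k\beta \in \Phi$ \emph{and} (ii) the product of structure constants is nonzero modulo $\chr(\F)$. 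Condition (i) alone would already deliver $Q = q$; the core task is therefore to verify that (ii) does not cause spurious vanishing. A parallel computation with $-\beta$ in place of $\beta$ yields $P = p$.

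The main obstacle is showing (ii) never fails under the stated hypotheses. I appeal to the classical identity $|N_{\gamma,\delta}| = p_{\gamma,\delta}+1$, bounded by the length of the root string through $\gamma$ in direction $-\delta$. For $\Phi$ simply laced, all root strings have length at most $2$, so $|N_{\gamma,\delta}| = 1$ whenever $\gamma + \delta \in \Phi$; these constants are nonzero in every field, and the bound $q \leq 1$ matches the two-case formula. For $\Phi$ doubly laced (types $\Bn$, $\Cn$, $\Fn$) root strings have length at most $3$, yielding $|N_{\gamma,\delta}| \leq 2$; under the hypothesis $\chr(\F) \neq 2$, which in the present setting forces $\chr(\F) = 3$, these remain nonzero modulo $\chr(\F)$, and the bound $q \leq 2$ matches the three-case formula. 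This also explains why the case $\chr(\F) = 2$ is excluded and why the lemma is silent on $\Gn$: a structure constant of absolute value $2$ vanishes in characteristic $2$, and in $\Gn$ a structure constant can reach $3$, which vanishes in characteristic $3$.
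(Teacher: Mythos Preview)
Your proof is correct and follows essentially the same approach as the paper's: both reduce to Lemma~\ref{lem_rootchain} by showing that the bracket tests faithfully detect the root-string parameters $p$ and $q$, the key point being that the relevant structure constants $N_{\gamma,\delta}=\pm(p_{\gamma,\delta}+1)$ have absolute value $1$ in the simply laced case and at most $2$ in the doubly laced case, hence are nonzero under the stated characteristic hypotheses. The only cosmetic difference is that the paper phrases the bound via the classification of the rank-$2$ subsystem generated by $\pm\alpha,\pm\beta$ (types $\mathrm A_1\mathrm A_1$, $\mathrm A_2$, or $\mathrm B_2$), whereas you phrase it via the maximal root-string length; these are equivalent.
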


\begin{proof}
\newcommand{\pgg}{p_{\gamma,\gamma'}} \newcommand{\qgg}{q_{\gamma,\gamma'}}
\newcommand{\Ngg}{N_{\gamma,\gamma'}} \newcommand{\Nab}{N_{\alpha,\beta}}
\newcommand{\Nba}{N_{\beta,\alpha}} \newcommand{\pab}{p_{\alpha,\beta}}
\newcommand{\qab}{q_{\alpha,\beta}} For any $\gamma, \gamma' \in \Phi$, let
$\pgg$ and $\qgg$ be the biggest non-negative integers such that $\gamma -
\pgg \gamma' \in \Phi$ and $\gamma + \qgg \gamma' \in \Phi$.  Recall from
(CB4) and \cite{Car72} that, if $\gamma+\gamma' \in \Phi$, then $[X_\gamma, X_{\gamma'}] = \Ngg
X_{\gamma+\gamma'}$, where $\Ngg=\pm (\pgg+1)$.

If $\Phi$ is simply laced, the subsystem of $\Phi$ generated by $\pm \alpha,
\pm\beta$ is of type $\mathrm A_1 \mathrm A_1$ or of type $\mathrm A_2$.  
Then $\alpha + \beta \in \Phi$ implies $\alpha - \beta
\not\in \Phi$, so $\Nab = \pm 1$. Similarly, $\Nba = \pm 1$.  This means
that, regardless of the characteristic, we can reconstruct $\pab$ and $\qab$
by the procedure described in the lemma, and thus compute $\langle \alpha,
\beta\ch \rangle = \pab - \qab$ by Lemma \ref{lem_rootchain}.

If $\Phi$ is doubly laced and $\chr(\F) \neq 2$, the subsystem of $\Phi$
generated by $\pm \alpha, \pm \beta$ is of type $\mathrm A_1 \mathrm A_1$,
$\mathrm A_2$, or $\mathrm B_2$. (Note that $\mathrm G_2$ never occurs inside
a bigger Lie algebra.)  In the first two cases the previous argument applies,
so assume $\pm \alpha, \pm \beta$ generate a subsystem of $\Phi$ of type
$\mathrm B_2$.  Similarly to the previous case, if $\alpha + \beta \in \Phi$
then $\alpha -2\beta \not\in \Phi$, so that $\Nab, \Nba \in \{ \pm 1, \pm
2\}$.  In particular, since $\chr(\F) \neq 2$, we find that both $\Nab$ and
$\Nba$ are nonzero, so that we can reconstruct $\pab$ and $\qab$ by the
procedure described in the theorem, and thus compute $\langle \alpha, \beta\ch
\rangle = \pab - \qab$ by Lemma \ref{lem_rootchain}.
\end{proof}

\begin{lemma}\label{lem_can_compute_cartint1}
Suppose that $L$ is a Chevalley Lie algebra over $\F$ with respect
to a root datum $R = (X,\Phi,Y,\Phi\ch)$, $H$ is a split Cartan subalgebra of
$L$, and $X_\alpha$ and $X_{\beta}$ are two root elements whose roots
with respect to $H$ are $\overline{\alpha}$ and $\overline{\beta}$
for certain $\overline\alpha, \overline\beta\in\overline\Phi$.
Suppose, furthermore, that one of the following statements holds.
\begin{enumerate}
\item\label{itm_lcc_char} $\chr(\F) \not\in \{2,3\}$,
\item\label{itm_lcc_simplaced} $\Phi$ is simply laced,
\item\label{itm_lcc_doubnot2} $\Phi$ is doubly laced and $\chr(\F) \neq 2$.
\end{enumerate}
Then $\langle \alpha, \beta\ch \rangle$ can be computed
from the available data
in $\tO(n^{10}\log(q))$ elementary operations.
\end{lemma}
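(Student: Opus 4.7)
The proof would proceed by case analysis on the three alternatives listed.

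In case \ref{itm_lcc_char}, Proposition \ref{prop_multdim} guarantees that all multiplicities are $1$, so $\alpha\mapsto\overline\alpha$ is injective and membership of $\alpha-p\beta$ in $\Phi$ is equivalent to membership of $\overline\alpha-p\overline\beta$ in $\overline\Phi$. I would therefore invoke Lemma \ref{lem_rootchain} directly: for $p,q=0,1,2,\ldots$ compute the vectors $\overline\alpha-p\overline\beta$ and $\overline\alpha+q\overline\beta$ in $\F^n$ and test membership in the precomputed set $\overline\Phi$. Chain lengths in irreducible root systems are bounded by a small constant, so only a bounded number of tests is needed, each at cost $O(n\cdot|\Phi|)=O(n^3)$ arithmetic operations. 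Return $\langle\alpha,\beta\ch\rangle=p-q$.

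In cases \ref{itm_lcc_simplaced} and \ref{itm_lcc_doubnot2}, the plan is to apply Lemma \ref{lem_rootchain_lieproduct}. This requires, in addition to $X_\alpha$ and $X_\beta$, vectors $X_{-\alpha}$ and $X_{-\beta}$ spanning the Chevalley-frame lines for $-\alpha$ and $-\beta$; these are extracted from the frame $\mathcal X$ produced by {\sc FindFrame}. When $\chr(\F)=2$ and $\mathcal X$ was obtained via the $[\mathrm A_2]$ procedure of Section \ref{sec_multdimsol_A2}, the unordered pairs $\{\F X_\alpha,\F X_{-\alpha}\}$ are already stored and the match is immediate. Otherwise, $\overline{-\alpha}$ is read off from $\overline\alpha$, restricting the candidates to the frame elements inside $L_{\overline{-\alpha}}$; among those, $X_{-\alpha}$ is characterised by (CB3) and (CB4) as the unique $Y\in\mathcal X\cap L_{\overline{-\alpha}}$ for which $[X_\alpha,Y]$ is a nonzero element of $H$. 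With $X_{-\alpha}$ and $X_{-\beta}$ in hand, compute the brackets $[X_{-\beta},X_\alpha]$ and $[X_\beta,X_\alpha]$ and, in case \ref{itm_lcc_doubnot2}, the second iterates $[X_{\pm\beta},[X_{\pm\beta},X_\alpha]]$, test each for nullity, and read off $\langle\alpha,\beta\ch\rangle=P-Q$ from the case tables of Lemma \ref{lem_rootchain_lieproduct}.

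For the complexity, each Lie bracket in $L$ is an $O(\dim(L)^3)=O(n^6)$ arithmetic-operation multiplication-table computation. Identifying $X_{-\alpha}$ or $X_{-\beta}$ uses at most $O(|\Phi|)=O(n^2)$ such brackets, and the subsequent application of Lemma \ref{lem_rootchain_lieproduct} requires only $O(1)$ further brackets. The total is therefore $O(n^8)$ arithmetic operations, that is, $\tO(n^8\log q)\subseteq\tO(n^{10}\log q)$ elementary operations, well within the claimed bound.

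The main obstacle I anticipate is the correct selection of $X_{-\alpha}$ and $X_{-\beta}$ in the multidimensional-root-space subcases of characteristic $2$, where $\overline\alpha=\overline{-\alpha}$ and the characters of $H$ no longer distinguish $X_\alpha$ from its negative. The two mechanisms described above, namely the stored $[\mathrm A_2]$ pairs and the bracket-in-$H$ test, handle this uniformly and at a cost subsumed by the stated bound.
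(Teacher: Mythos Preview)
Your approach mirrors the paper's almost exactly: case~(\ref{itm_lcc_char}) is handled via root chains computed directly in $\overline\Phi$ (the paper justifies this by the rank-$2$ verification stated just before Lemma~\ref{lem_rootchain_lieproduct}), and cases~(\ref{itm_lcc_simplaced}) and~(\ref{itm_lcc_doubnot2}) are handled by first locating $\F X_{-\alpha}$, $\F X_{-\beta}$ and then invoking Lemma~\ref{lem_rootchain_lieproduct}. The paper likewise uses the stored $[\mathrm A_2]$ pairs when $\chr(\F)=2$ and reads off $-\overline\alpha$ from $\overline\alpha$ when $\chr(\F)\ne2$. Your bracket-in-$H$ criterion for singling out $X_{-\alpha}$ among the frame elements of $L_{\overline{-\alpha}}$ is more explicit than anything the paper writes down (the paper's ``simply by considering $\{\overline\gamma\mid\gamma\in\Phi\}$'' is terse here), but it is correct and costs no more than you claim.

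The one genuine omission is the degenerate case $\alpha=\pm\beta$. Lemma~\ref{lem_rootchain_lieproduct} is stated only for $\alpha\ne\pm\beta$, and the root-chain formula of Lemma~\ref{lem_rootchain} also fails when $\beta\in\{\alpha,-\alpha\}$ since the $\beta$-string through $\alpha$ is no longer an unbroken string of roots. The paper disposes of these two cases first: $\alpha=\beta$ is detected by $\dim\langle\F X_\alpha,\F X_\beta\rangle_\F=1$ and gives $\langle\alpha,\beta\ch\rangle=2$; $\alpha=-\beta$ is detected via $\overline\alpha=-\overline\beta$ when $\chr(\F)\ne2$, or via the stored pairs $\{\{\gamma,-\gamma\}\mid\gamma\in\Phi^+\}$ produced by {\sc FindFrame} when $\chr(\F)=2$, and gives $\langle\alpha,\beta\ch\rangle=-2$. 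You already have both detection mechanisms in hand, so the fix is simply to invoke them before applying either lemma.
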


\begin{proof}
Observe first of all that the case where $\alpha=\beta$ is easily caught, for example by computing $\dim(\langle \F X_{\alpha}, \F X_{\beta} \rangle_\F)$. Obviously then $\langle \alpha,\beta\ch\rangle=2$.

Moreover, we can distinguish the case where $\alpha = -\beta$ as follows.
If $\chr(\F) \neq 2$ we may simply test whether $\overline{\alpha} = -\overline{\beta}$. 
If on the other hand $\chr(\F) = 2$, we find the sets $\{ \{ \gamma, -\gamma\}
\mid \gamma \in \Phi^+\}$ as an auxiliary result of the algorithm {\sc
  FindFrames} described in introduction of Section \ref{sec_multdimsol_A2}.
If $\alpha = -\beta$, then of course $\langle \alpha, \beta\ch \rangle = -2$.

So assume $\alpha \neq \pm \beta$. Now if (\ref{itm_lcc_char}) holds we
compute $\langle \alpha,\beta\ch \rangle$ from the roots $\overline{\alpha}$
and $\overline{\beta}$ using Lemma \ref{lem_rootchain}, as mentioned earlier.

Suppose, therefore, (\ref{itm_lcc_simplaced}) or (\ref{itm_lcc_doubnot2})
holds.  We can find $\F X_{-\alpha}$ and $\F X_{-\beta}$ either simply by
considering $\{ \overline{\gamma} \mid \gamma \in \Phi \}$ (if $\chr(\F) \neq
2$) or as an auxiliary result of {\sc FindFrames} (if $\chr(\F) = 2$).  This
leaves us in a position where we may apply Lemma
\ref{lem_rootchain_lieproduct}, and thus find $\langle \alpha, \beta\ch
\rangle$.

Finally, the time needed does not exceed the time needed for standard linear
algebra arithmetic for each pair of roots, that is, $\tO(n^4\cdot
n^6\log(q))$.
\end{proof}

The last lemma enables us to compute Cartan integers in many cases.  For the
cases not covered by Lemma \ref{lem_can_compute_cartint1} we proceed as
follows to construct a direct identification $\iota$.

\begin{itemize}
\item $\mathrm B_n(2)$: The short root spaces generate an ideal, $I$ say, of
$L$ found by the Meat-axe, and the root eigenspaces of $H$ that do not lie in
$I$ belong to long roots.  These root spaces generate a subalgebra of type
$\mathrm D_n$.  This Lie algebra is simply laced, so the root identification
problem can be solved there.  This identifies the long root spaces. Now, for
$i = 1, \ldots, n$, let the short root $\gamma_i$ be $\alpha_i + \alpha_{i+1}
+ \cdots + \alpha_n$ and let $\alpha_0 = \alpha_1 + 2\alpha_2 + 2\alpha_3 +
\cdots + 2\alpha_n$ be the (long) highest root.  Observe then that $[
X_{\alpha_0}, X_{-\gamma_1} ] = X_{\gamma_2}$ and $[ X_{\alpha_0},
X_{-\gamma_2} ] = X_{\gamma_1}$, and $X_{-\gamma_1}$ and $X_{-\gamma_2}$ are
the only short root elements that do not commute with $X_{\alpha_0}$.  This
fact, together with the set of pairs $\{ \{ \gamma, -\gamma \} \mid \gamma \in
\Phi^+\}$ obtained in {\sc FindFrames}, allows us to find $X_{\pm \gamma_1}$
and $X_{\pm \gamma_2}$.  Note that we have to execute this procedure at most
twice, since there are only elements of $\mathcal X$ that could be identified
with $X_{-\gamma_1}$, and the other short root elements are fixed once
$X_{-\gamma_1}$ is fixed.

The other short root elements may now simply be found by using relations such as
$[X_{\gamma_i}, X_{-\alpha_i}] = X_{\gamma_{i+1}}$. 
\item $\mathrm C_n(2)$: The short root spaces generate an ideal of $L$ of type
$\mathrm D_n$, so we execute a similar procedure as in the previous case.
\item $\mathrm F_4(2)$: The short roots generate generate an ideal of $L$ of
dimension $26$ which together with the Cartan subalgebra $H$ gives a
$28$-dimensional subalgebra of type $\mathrm D_4$, allowing the same procedure
as before.
\item $\mathrm G_2(3)$: Similarly to the previous cases, we use the fact that
the short roots generate an ideal of $L$ of type $\mathrm A_2$, which is again simply
laced.
\item $\mathrm G_2(2)$: As described in Section \ref{sec_multdimsol_G22}, the
manner in which the root spaces in $L^{\mathrm A}$ correspond to those in $L$
is completely determined. Therefore, we may use the roots identified in
$L^{\mathrm A}$, which is simply laced, to identify the roots in
$L$.
\end{itemize}

These methods lead to the following conclusion.
\begin{proposition}\label{prop_identify_roots}
Given $L$ over $\F$, $H$, $R = (X, \Phi, Y, \Phi\ch)$, the set $\overline\Phi$
of roots of $H$ on $L$, and a Chevalley frame $\mathcal X $, the routine {\sc
IdentifyRoots} finds a bijection $\iota:
\Phi \rightarrow \mathcal X$ such that for all $\alpha,\beta \in \Phi$,
$\alpha \neq \pm \beta$,
\[
[\iota( \alpha ), \iota(\beta)]= \left\{ \begin{array}{ll} 
\iota(\alpha+\beta) &  \mbox{ if } \alpha+\beta \in \Phi \mbox{ and } N_{\alpha,\beta} \not\equiv 0\; (\mathrm{mod}\ p), \\
\{ 0\} & \mbox{otherwise.}
\end{array} \right.
\]
For $\F=\F_q$,
the routine needs $\tO(n^{10}\log(q))$ elementary operations.
\end{proposition}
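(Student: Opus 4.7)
The plan is to split the analysis of \textsc{IdentifyRoots} into the generic regime covered by Lemma \ref{lem_can_compute_cartint1}, and the residual small-characteristic cases enumerated just before the proposition.

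In the generic regime (either $\chr(\F) \notin \{2,3\}$, or $\Phi$ simply laced, or $\Phi$ doubly laced with $\chr(\F)\neq 2$), I would invoke Lemma \ref{lem_can_compute_cartint1} to compute every Cartan integer $\langle\alpha,\beta\ch\rangle$ as a function of the frame elements. Once these integers are known, the bijection $\iota$ can be assembled in essentially the same manner as the classical routine \textsc{CanonicalGenerators} of \cite[Section 5.11]{deGraaf00}: single out a set of simple roots by testing positivity against a fixed ordering of $\overline{\Phi}$, match the resulting labelled Cartan matrix against the Cartan matrix $C$ supplied by $R$, and then extend $\iota$ to all of $\Phi$ by reading off each non-simple root from the commutator $[\iota(\alpha),\iota(\beta)]$.

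For the five residual cases $\Bn(2)$, $\Cn(2)$, $\Fn(2)$, $\Gn(3)$, $\Gn(2)$, I would follow the recipes sketched in the bullet list above: each of them supplies a canonically defined ideal (in the $\Gn(2)$ case an ideal inside $\Der(L)$) whose Dynkin type is simply laced --- $\Dn$, $\Dn$, $\mathrm D_4$, $\mathrm A_2$, and $\mathrm A_3$ respectively --- so that the identification problem for those roots falls inside the generic regime. With those roots identified, the remaining root elements are recovered using explicit bracket identities such as $[X_{\alpha_0}, X_{-\gamma_1}] = X_{\gamma_2}$ in the $\Bn(2)$ case, and any leftover ambiguity is eliminated by consulting the pair data $\{\{\gamma,-\gamma\}\}$ supplied by \textsc{FindFrame} (cf.~Section \ref{sec_multdimsol_A2}). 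Correctness of the resulting $\iota$ with respect to the displayed commutator rule follows directly from (CB4): when $N_{\alpha,\beta}\not\equiv 0\pmod p$ the bracket $[\iota(\alpha),\iota(\beta)]$ spans $\iota(\alpha+\beta)$, and otherwise it vanishes, which is exactly the case split in the statement.

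For the running time when $\F=\F_q$, Lemma \ref{lem_can_compute_cartint1} already bounds the Cartan integer harvest by $\tO(n^{10}\log(q))$ elementary operations; all other ingredients --- Meat-axe on modules of dimension $O(n^2)$, direct-sum decompositions in bounded dimension, and $O(n^4)$ bracket evaluations --- are dominated by this. I expect the main obstacle to lie in the five special cases, where one must verify that the chain of constructions (canonical Meat-axe ideal, simply laced subalgebra, generic identification, back-propagation through brackets) yields a full bijection consistent with the commutator rule, and that any finite residual choice (such as the swap $\gamma_1\leftrightarrow\gamma_2$ in $\Bn(2)$) can be resolved by testing only a bounded number of alternatives, so that the asymptotic cost is unaffected.
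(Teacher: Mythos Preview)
Your proposal is correct and follows essentially the same approach as the paper: split into the generic regime handled by Lemma~\ref{lem_can_compute_cartint1} plus De~Graaf's simple-root selection, and the five residual cases handled via the simply laced ideal/subalgebra followed by bracket back-propagation. The only minor difference is that the paper observes the ideal in the residual cases can be read off directly as a sum of frame elements (since $\mathcal X$ is already known) rather than via the Meat-axe, but this does not affect the stated bound.
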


\begin{proof}
Lemma \ref{lem_can_compute_cartint1} shows that in many cases we can compute
Cartan integers. To this end, we need to compute $\langle \alpha, \beta\ch
\rangle$ for all $O(n^4)$ pairs of roots, and every computation of this type
involves at most $6$ multiplications in $L$, requiring a total of
$\tO(n^{4+6} \log(q))$ elementary
operations.  Once these numbers are computed, it takes
$O(n^4)$ steps to select a set of simple roots and subsequently
to complete the bijection between $\Phi$ and
$\mathcal X$.  These last two steps use techniques similar to those
described by De Graaf \cite[Section 5.11]{deGraaf00}: the creation of a set
of simple roots $\Pi$ starts with taking an arbitrary root to be the first
member of $\Pi$.  We then iteratively pick a suitable additional simple root
$\beta$ having Cartan integer $\langle \alpha,\beta\ch\rangle \le0$ with the
members $\alpha$ of $\Pi$.  This proves that we can
make the required bijection in $\tO(n^{10} \log(q))$ time for the cases covered
by Lemma \ref{lem_can_compute_cartint1}.

For the remainder of the proof, we can restrict ourselves to the cases not
covered by Lemma \ref{lem_can_compute_cartint1}.  Here the procedure described
provides $\iota$ directly, so we only need prove the last assertion of the
proposition.  As $\mathrm G_2(2)$ is directly reduced to a case already
treated, it needs no further consideration. In each of the remaining cases, we
need to compute a subalgebra or an ideal of $L$. Although this is hard in
general, the fact that we have already found the Chevalley frame $\mathcal X$
and the fact that the subalgebra or ideal is a sum of elements from $\mathcal
X$ imply that the computations take
$\tO(n^{\maxexp}\log(q))$ elementary operations. 
A bijection $\iota'$ from
the relevant subsystem of $\Phi$ to the subset of $\mathcal X$ of
root spaces lying in the
ideal may then be identified in time $\tO(n^{10} \log(q))$.
Finally, extending $\iota'$
to the entirety of $\Phi$ is a straightforward task, requiring only 
standard linear algebra arithmetic in $L$.

This shows that we can make the required bijection in time stated for all
cases.
\end{proof}

\section{Conclusion}\label{sec_conclusion}

\begin{table}
	\[ \begin{array}{l|rrrr} R & \mathbb{Q} & 17 & 3^3 & 2^6 \cr
	\hline
	\hline
	\mathrm A_{1} & 0 & 0 & 0 & 0 \cr
	\mathrm A_{2} & 0 & 0 & 0 & 0 \cr
	\mathrm A_{3} & 0 & 0 & 0 & 0.7 \cr
	\mathrm A_{4} & 0.1 & 0 & 0.1 & 0.1 \cr
	\mathrm A_{5} & 0.1 & 0.1 & 0.1 & 0.2 \cr
	\mathrm A_{6} & 0.3 & 0.2 & 0.4 & 0.6 \cr
	\mathrm A_{7} & 0.6 & 0.5 & 0.9 & 1.5 \cr
	\mathrm A_{8} & 1.4 & 1 & 2 & 3.6 \cr
	\mathrm A_{9} & 2.8 & 2 & 4.2 & 7.9 \cr
	\mathrm B_{1} & 0 & 0 & 0 & 0 \cr
	\mathrm B_{2} & 0 & 0 & 0 & 0 \cr
	\mathrm B_{3} & 0 & 0 & 0.1 & 0.4 \cr
	\mathrm B_{4} & 0.1 & 0.1 & 0.2 & 1.9 \cr
	\mathrm B_{5} & 0.3 & 0.2 & 0.9 & 4.8 \cr
	\end{array} \;\;
	\begin{array}{l|rrrr} R & \mathbb{Q} & 17 & 3^3 & 2^6 \cr
	\hline
	\hline
	\mathrm B_{6} & 0.9 & 0.6 & 3.2 & 20 \cr
	\mathrm B_{7} & 2.2 & 1.6 & 10 & 54 \cr
	\mathrm B_{8} & 5.3 & 3.9 & 27 & 172 \cr
	\mathrm B_{9} & 12 & 8.8 & 68 & 493 \cr
	\mathrm C_{1} & 0 & 0 & 0 & 0 \cr
	\mathrm C_{2} & 0 & 0 & 0 & 0 \cr
	\mathrm C_{3} & 0 & 0 & 0.1 & 0.1 \cr
	\mathrm C_{4} & 0.1 & 0.1 & 0.2 & 1.1 \cr
	\mathrm C_{5} & 0.3 & 0.2 & 0.9 & 10 \cr
	\mathrm C_{6} & 0.9 & 0.6 & 3.2 & 40 \cr
	\mathrm C_{7} & 2.2 & 1.6 & 10 & 177 \cr
	\mathrm C_{8} & 5.2 & 3.9 & 27 & 693 \cr
	\mathrm C_{9} & 12 & 8.8 & 69 & 2212 \cr
	\cr
	\end{array} \;\;
	\begin{array}{l|rrrr} R & \mathbb{Q} & 17 & 3^3 & 2^6 \cr
	\hline
	\hline
	\mathrm D_{1} & 0 & 0 & 0 & 0 \cr
	\mathrm D_{3} & 0 & 0 & 0 & 0.3 \cr
	\mathrm D_{4} & 0.1 & 0.1 & 0.1 & 3.2 \cr
	\mathrm D_{5} & 0.2 & 0.1 & 0.3 & 22 \cr
	\mathrm D_{6} & 0.6 & 0.4 & 0.9 & 121 \cr
	\mathrm D_{7} & 1.6 & 1.1 & 2.8 & 545 \cr
	\mathrm D_{8} & 3.8 & 2.8 & 7.7 & 1994 \cr
	\mathrm D_{9} & 8.6 & 6.4 & 19 & 6396 \cr
	\mathrm E_{6} & 0.9 & 0.6 & 1.6 & 3.3 \cr
	\mathrm E_{7} & 4.1 & 3 & 11 & 27 \cr
	\mathrm E_{8} & 28 & 21 & 112 & 398 \cr
	\mathrm F_{4} & 0.2 & 0.2 & 0.7 & 3.3 \cr
	\mathrm G_{2} & 0 & 0 & 0.1 & 0.5 \cr
	\cr
	\end{array} \]
\caption{Algorithm \ref{alg_chevbas} timings}\label{tab_timings}
\end{table}

As discussed in Section \ref{sec_intro_alg_chevbas} the more difficult steps
of Algorithm \ref{alg_chevbas} are {\sc FindFrame} and {\sc IdentifyRoots}. In
Sections \ref{sec_finding_frames} (Proposition \ref{prop_FindFrame}) and
\ref{sec:RootIdent} (Proposition \ref{prop_identify_roots}) we established
that these steps can be dealt with in time $\tO(n^{\maxexp}\log(q)^4)$. This
proves Theorem \ref{th:main} for a given root datum.

We emphasize that this estimate is only asymptotic. Additionally, in Table
\ref{tab_timings} we present timings of Algorithm \ref{alg_chevbas} for
various root data and for four different fields: $\mathbb{Q}$, $\GF(17)$,
$\GF(3^3)$, and $\GF(2^6)$.  The times given are in seconds, for the most
time-consuming root datum with the specified Lie type.  Input for the
algorithm were a Chevalley Lie algebra and its splitting Cartan subalgebra, to
which a random basis transformation was applied, and the root datum.  The
timings were produced using \Magma\ 2.15-5 on an Intel Core 2 Quad CPU running
at 2.4 GHz with 8GB of memory available, although only one core and at most
2.7GB of memory were used.

\bigskip

As hinted at earlier, Algorithm \ref{alg_chevbas} can easily be used to produce an
algorithm that takes only $L$ and $H$ and produces the root datum $R$ and a
Chevalley basis.  To see this, note first that, because $H$ is given and the
underlying algebraic group is assumed to be simple, we may use $\dim(H) =
\rnk(R)$, the dimension of $L$, and the classification of simple Lie algebras
to narrow down the root system to one or two possibilities (or three, but only
if $\dim(L) = 78$ and $\dim(H) = 6$). Therefore, the number of possible root
systems it at most $3$.

Second, given a root system, the number of possible root data is small as
well.  If the root system is not of type $\mathrm A$ or $\mathrm D$, the
number of possible isogeny types is at most $2$.  If the root system is of
type $\mathrm D$ the number of possible isogeny types is at most $5$, as
explained in Section \ref{sec_multdim}.  So suppose $\Phi$ is of type $\An$,
and fix $p = \chr(\F)$. Note that the fundamental group is $\mathbb
Z/(n+1)\mathbb Z$.  Since two root data for $\An$ lead to isomorphic Lie
algebras if both have the same exponent of $p$ in $[X:\Z \Phi]$,
we need consider at most $\log_p(n+1) + 1=O(\log(n))$ different
isogeny types.  Thus, in order to identify the correct root datum,
we run Algorithm \ref{alg_chevbas} a sufficiently small
number of times for the polynomiality bound given in the theorem
to remain intact.
This finishes the proof of Theorem \ref{th:main}.

\bigskip
A problem hinted at, but not solved satisfactorily, is finding a split
Cartan subalgebra of a Chevalley Lie algebra $L$ if $p = 2$.
Nevertheless, verifying that a given subalgebra is indeed a split Cartan
subalgebra is easy.  So our results are still useful, since one is often able
to obtain such a subalgebra by other means, for example as part of the
original problem. 
Moreover, experimental implementations of randomized algorithms for finding
split Cartan subalgebras look promising.
We intend to publish about these algorithms in forthcoming work.

A primary goal in writing the Chevalley basis algorithm is to use it for
conjugacy questions in simple algebraic groups $G$ or finite groups $G(\F_q)$
of rational points over $\F_q$.  One of the complications in this application
is the fact that the group $\Aut(L)$ may be much larger than $G(\F_q)$.  For
this purpose, a method is needed to write an arbitrary automorphism of $L$ as
a product of an element from $G(\F_q)$ and a particular coset representative
of $G(\F_q)$ in $\Aut(L)$. Such a method is in \cite{CMT04} and is also used in
\cite{CM06}.

\bigskip
Once Algorithm \ref{alg_chevbas} completes succesfully we have a certificate
for a Lie algebra to be of type $R$: when presented with a candidate Chevalley
basis $X^0,H^0$, we only need to carry out the straightforward and quick task
of verifying that $X^0,H^0$ is indeed a Chevalley basis for $L$ with respect
to $H$ and $R$.  In this way, our work also contributes to a recognition
procedure for modular simple Lie algebras.  Obviously, Algorithm
\ref{alg_chevbas} can be used for establishing an isomorphism between two
Chevalley Lie algebras over the same field and of the same root datum.

\end{document}